\definecolor{green}{rgb}{0,0.6,0}
\definecolor{blue}{rgb}{0,0,1}
\theoremstyle{plain}
\newtheorem{neu}{}[section]
\newtheorem*{Cor*}{Corollary}
\newtheorem{Thm}[neu]{Theorem}
\newtheorem*{Thm*}{Theorem}
\newtheorem{Prop}[neu]{Proposition}
\newtheorem*{Prop*}{Proposition}
\theoremstyle{definition}
\newtheorem{Lemma}[neu]{Lemma}
\newtheorem*{Rmk*}{Remark}
\newtheorem{Rmk}[neu]{Remark}
\newtheorem*{Ex*}{Example}
\newtheorem*{Qu*}{Question}
\newtheorem{Def}[neu]{Definition}
\theoremstyle{remark}
\theoremstyle{definition}
\newcommand{\p}{\partial}
\newcommand{\om}{\omega}
\newcommand{\pf}{\longrightarrow}
\newcommand{\F}{{\mathbb{F}}}
\newcommand{\N}{{\mathbb{N}}}
\newcommand{\Z}{{\mathbb{Z}}}
\newcommand{\R}{{\mathbb{R}}}
\newcommand{\Q}{{\mathbb{Q}}}
\newcommand{\FF}{{\mathrm{F}}}
\newcommand{\LLL}{\mathscr{L}}
\renewcommand{\H}{\mathrm{H}}
\newcommand{\id}{\mathrm{id}}
\newcommand{\st}{\mathrm{st}}
\newcommand{\Id}{\mathrm{Id}}
\newcommand{\CF}{\mathrm{CF}}
\newcommand{\HF}{\mathrm{HF}}
\newcommand{\RFH}{\mathrm{RFH}}
\newcommand{\CM}{\mathrm{CM}}
\newcommand{\HM}{\mathrm{HM}}
\newcommand{\Crit}{{\rm Crit}}
\renewcommand{\AA}{\mathcal{A}}
\newcommand{\HH}{\mathcal{H}}
\newcommand{\DD}{\mathcal{D}}
\newcommand{\BB}{\mathcal{B}}
\newcommand{\VV}{\mathcal{V}}
\newcommand{\CC}{\mathscr{C}}
\newcommand{\MM}{\mathscr{M}}
\newcommand{\x}{\times}
\newcommand{\beq}{\begin{equation}}
\newcommand{\beqn}{\begin{equation}\nonumber}
\newcommand{\eeq}{\end{equation}}
\newcommand{\bea}{\begin{equation}\begin{aligned}}
\newcommand{\bean}{\begin{equation}\begin{aligned}\nonumber}
\newcommand{\eea}{\end{aligned}\end{equation}}
\numberwithin{equation}{section}
\numberwithin{figure}{section}
\begin{document}
\title[Invariance property of Morse homology on noncompact manifolds]{Invariance property of Morse homology \\on noncompact manifolds}
\author{Jungsoo Kang}

\address{Department of Mathematical Sciences, Seoul National University (SNU),
Kwanakgu Shinrim, San56-1 Seoul, South Korea, Email:
hoho159@snu.ac.kr}

\begin{abstract}
In this article, we focus on the invariance property of Morse homology on noncompact manifolds. We expect to apply outcomes of this article to several types of Floer homology, thus we define Morse homology purely axiomatically and algebraically. The Morse homology on noncompact manifolds generally depends on the choice of Morse functions; it is easy to see that critical points may escape along homotopies of Morse functions on noncompact manifolds. Even worse, homology classes also can escape along homotopies even though critical points are alive. The aim of the article is two fold. First, we give an example which breaks the invariance property by the escape of homology classes and find appropriate growth conditions on homotopies which prevent such an escape. This takes advantage of the bifurcation method. Another goal is to apply the first results to the invariance problem of Rabinowitz Floer homology. The bifurcation method for Rabinowitz Floer homology, however, is not worked out yet. Thus believing that the bifurcation method is applicable to Rabinowitz Floer homology, we study the invariance problems of Rabinowitz Floer homology.
\end{abstract}

\maketitle
\tableofcontents

\section{Introduction}
In recent times, several types of Morse and Floer homology have been developed and widely studied. The power of Morse and Floer homologies is the invariance property; that is, these homologies are independent of the choice of the Morse or Hamiltonian functions (or symplectic forms). Unfortunately this is rarely true on noncompact manifolds. One can easily find two Morse functions on $\R$ such that the respective Morse homologies are not isomorphic. There are two methods to show the invariance property of Morse and Floer theory. The first one is the continuation method; we count gradient flow lines of a homotopy between two Morse functions and this gives a continuation homomorphism between two respective Morse homologies. The other tool is the bifurcation method; we again consider a homotopy between two Morse functions and analyze how the Morse chain varies along the homotopy. It was introduced by Floer \cite{Fl1} to show the invariance of Lagrangian Floer homology though it was not completely justified, but this method was replaced by the continuation argument by himself in \cite{Fl2}. Recently, Hutchings and Lee \cite{Hu,Lee1,Lee2} completed the analysis required in the bifurcation method and it was used in \cite{Co,Us}; to be more specific, Hutchings worked on generalized Morse theory and Lee worked on Floer theories for the torsion invariant of Morse and Floer theories \cite{HL1,HL2}. In particular, Lee proved in the Floer theoretic setting that there exists a ``regular homotopy of Floer systems (RHFS)'' such that only two types of degeneracies can happen along this homotopy, namely ``birth-death'' and ``handle-slide''.\\[-2ex]

Both methods are painful but useful in the following sense. For the continuation method, we need compactness for gradient flow lines of a time-dependent action functional; but it gives a concrete isomorphism. On the other hand, we need to study gluing and decaying for the bifurcation method. However once this required analysis is worked out, it enables the detection of more general things; for example, invariance of the Reidemeister torsion in Morse and Floer theories has been studied in depth by Hutchings and Lee \cite{Hu,HL1,HL2,Lee1,Lee2} using the bifurcation method.\\[-2ex]

The purpose of this article is two fold. First, we investigate the invariance problem of Morse homology on noncompact manifolds by using the bifurcation method. As we have already mentioned, Morse homology can change along homotopies on noncompact manifolds. This incident can obviously be caused by the escape of critical points, see Remark \ref{Rmk:escape of a critical point} and Figure \ref{fig:escape}; even worse, homology classes also can escape as described in Theorem A. This is a very surprising phenomenon because homology classes escape to infinity whereas critical points keep alive. How does this happen? Let us change this problem to an interesting story. Suppose that there is no bus to go to heaven, how can we reach heaven? The answer is to transfer infinitely many buses of higher and higher speed and then we eventually arrive in heaven in finite time although no buses arrives at heaven. Using this idea, we will illustrate that homology classes can escape to infinity by infinitely many handle-slides or birth-deaths. It shows that if there are infinitely many generators of chain groups, a homology class may disappear even though generators may not. In the classical Floer theory, chain groups of Floer homology are of finite dimension over a suitable Novikov ring. However it is not true anymore for Rabinowitz Floer homology. So the escape of homology classes is a new phenomenon arising in Rabinowitz Floer theory. In order to prove the invariance property of Rabinowitz Floer homology, Cieliebak-Frauenfelder-Paternain \cite{CFP} and Bae-Frauenfelder \cite{BF} took advantage of the continuation method. On the other hand, one may expect that the invariance of Rabinowitz Floer homology can be proved by means of the bifurcation method.\\[-2ex]

\noindent\textbf{Question A}. Is the bifurcation method applicable to Rabinowitz Floer theory?\\[-2ex]

We expect that the above question will be answered in the near future.  The second aim of the article is to apply the first results to the invariance problem of Rabinowitz Floer homology. Since the bifurcation method for Rabinowitz Floer homology, however, is not worked out yet. Thus believing the bifurcation method is applicable to Rabinowitz Floer theory, we study the invariance problems of Rabinowitz Floer homology. The following Question B is our starting point.\\[-2ex]

\noindent\textbf{Question B}. Believing the answer to Question A is positive, can we prove the invariance property of Rabinowitz Floer homology using the bifurcation method?\\[-2ex]

In Theorem B and C, we give sufficient conditions preventing the escape of homology classes; more precisely, in Theorem B we impose an appropriate growth restriction on homotopies so that Morse homology is invariant; moreover, we show that a given homology class never escapes under a mild growth restriction in Theorem C. We apply these results to Rabinowitz Floer homology developed by Cieliebak-Frauenfelder \cite{CF}. The invariance problem of Rabinowitz Floer homology on stable or contact manifolds is not completely known yet. Interestingly, if the answer to Question A is positive then by examining the bifurcation process, we can prove the invariance property of Rabinowitz Floer homology along stable tame homotopies, see section 4; furthermore we are able to slightly relax the tameness condition.

\subsubsection*{Acknowledgments} I am indebted to Urs Frauenfelder for various suggestions and discussions.

\section{Cerf diagram and Morse homology}
In this section, we define Morse homology purely axiomatically and algebraically because we hope our results can be applied to all various type of Floer theories which satisfy the basic ingredients of Morse homology theory. Though our story begins with algebraic axioms, the classical Morse and Floer homologies satisfy these axioms.
\subsection{Cerf tuple and Cerf diagram}
We set projection maps
$$\pi_1,\, \pi_3:\R^2\x[0,1]=\R\x[0,1]\x\R\pf\R,\quad \pi_2:\R\x[0,1]\x\R\pf[0,1].$$
\begin{Def}
We call a tuple $\CC=(C,F)$ a {\em Cerf tuple} if the following conditions hold.
\begin{itemize}
\item[$(\CC1)$] $C$ is a one dimensional manifold with boundary such that each connected component of $C$ is compact.
\item[$(\CC2)$] $F:C\pf\R^2\x[0,1]$ is a smooth map with the property: $\pi_3\circ F$ is proper and, for a connected component $c\subset C$, $F|_{c}:c\pf\R^2\x[0,1]$ is either a Legendrian knot or a Legendrian chord which begins and ends on the pre-Lagrandian submanifold $\R^2\x\{0\}$ or $\R^2\x\{1\}$.
\end{itemize}
We refer to the appendix for the notions of the Legendrian knot and chord and the pre-Lagangian. We denote by $F_i:=\pi_i\circ F$, $i=1,2,3$. For a given Cerf tuple, the front projection of parameterized Legendrian curves $F(C)$ is called the {\em Cerf diagram}:
$$\big\{(F_2(c),F_3(c))\,|\,c\subset C\big\}\subset[0,1]\x\R.$$
\end{Def}

\begin{figure}[htb]\label{Cerf-tuple}
\input{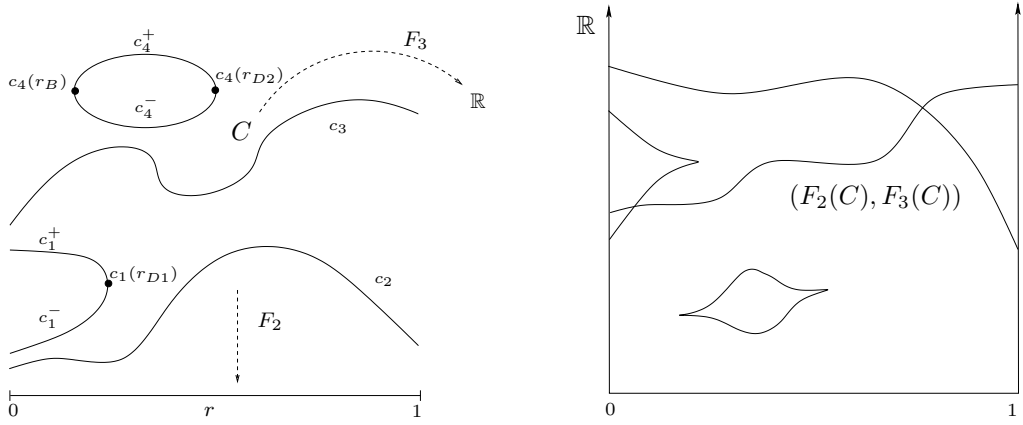}\caption{Cerf tuple and Cerf diagram}
\end{figure}

\begin{Rmk}
Let us take a look at the Cerf triple and the Cerf diagram in the Morse theoretic viewpoint. We have a one-parameter Morse functions $\{f_r\}_{r\in[0,1]}$ on a manifold $M$. Then a one dimensional manifold $C$ corresponds to $\Crit f_r\subset M\x[0,1]$ and the smooth function $F_2$ on $C$ indicates the parameter $r$ and $F_3$ is nothing but the Morse function $f_r$ at critical points.
\end{Rmk}
\begin{Rmk}
As can be seen in Figure \ref{Cerf-tuple}, cusps appear in the Cerf diagram. About the reason, we refer to Remark \ref{rmk:legendrian lifting}.
\end{Rmk}

\noindent{\em Degeneracies}. For clarity, we indicate dependence of the parameter $r\in[0,1]$ by $c_1(r)\in c_1$ for $F_2(c_1(r))=r$. If $c_1$ has two points with same $F_2$-value, we denote by $c_1^+(r)$ and $c_1^-(r)$. We often write the subscripts $D$, $B$, and $H$ to allude the degenerate types, namely birth-deaths or handle-slides. We also define the {\em set of deaths} and the {\em set of birthes} as follows:
\bean
\bullet&\quad\DD_0:=\bigr\{c(r_D)\in c\subset C\,\bigr|\,\textrm{$c(r_D)$ is a local maximum point of $F_2$}\bigr\},\\[0.5ex]
\bullet&\quad\BB_0:=\bigr\{c(r_B)\in c\subset C\,\bigr|\,\textrm{$c(r_B)$ is a local minimum point of $F_2$}\bigr\}.
\eea
We note that the above sets are discrete in $C$. For $c(r_B)\in\BB$, we note that for a small $\epsilon>0$ then $F_2^{-1}(r_B+\epsilon)|_c$ consists of two distinct points. As mentioned, we denote each of them by $c_B^+(r_B+\epsilon)$ and $c_B^-(r_B+\epsilon)$. We analogously define $c_D^+(r_D-\epsilon)$ and $C_D^-(r_D-\epsilon)$ for deaths. These degeneracies, birth-deaths, is caused by the Cerf tuple itself.

\subsection{Graph structure}
\begin{Def}
A {\em graph structure} on a topological space $G$ is a discrete subset $V$ of $G$ such that $G\setminus V$ is a 1-dimensional manifold. A pair $(G,V)$ is called a {\em graph}. An element in $V$ is called a {\em vertex} and each connected component of $G\setminus V$ is called an {\em edge}.
\end{Def}
\begin{Def}
A graph structure $V_0$ on $G$ is called a {\em supergraph structure} of the graph $(G,V)$ if $V_0\supseteq V$.
\end{Def}
Let $(G,V)$ be a graph. For index sets $I$ and $J$, we set
$$
V=\{v_i\,|\,v_i\in G,\,i\in I\},\qquad \pi_0(G\setminus V)=\{e_j\,|\,e_j\subset G,\,j\in J\}.
$$
Let $\F$ be any principal ideal domain (e.g. $\Z_2$, $\Z$, or $\Q$) with the discrete topology.
\begin{Def}
A function $\phi:G\pf\F$ is called a {\em step function} on $(G,V)$ if it can be written as
$$
\phi(g)=\sum_{i\in I}f_i\chi_{v_i}(g)+\sum_{j\in J}f_j\chi_{e_j}(g),\quad  g\in G,\quad f_i,\, f_j\in\F
$$
where $\chi_{v_i}$ and $\chi_{e_j}$ are the indicator functions defined by
\beqn
\chi_{v_i}(g)=\left\{\begin{aligned} 1 & \textrm{   if   } g=v_i\\ 0 & \textrm {   if   } g\neq v_i\end{aligned}\right. \qquad
\chi_{e_j}(g)=\left\{\begin{aligned} 1 & \textrm{   if   } g\in e_j\\ 0 & \textrm {   if   } g\notin  e_j\end{aligned}\right.
\eeq
\end{Def}
We define the {\em fiber product} of $F_2:C\pf[0,1]$ as follows:
$$
C\x_{F_2} C:=\big\{(c_1,c_2)(r)\in C\x C\,|\,F_2(c_1(r))=r=F_2(c_2(r))\big\}.
$$
This fiber product has a natural graph structure given by the Cerf tuple. The sets $\DD_0$ and $\BB_0$ defined in the previous subsection give the following subsets of the fiber product of $F_2$.
\bean
\bullet&\quad\DD:=\bigr\{(c_1,c_2)(r_D)\in C\x_{F_2} C\,\bigr|\,c_1(r_D)\textrm{ or } c_2(r_D)\in\DD_0\bigr\},\\[0.5ex]
\bullet&\quad\BB:=\bigr\{(c_1,c_2)(r_B)\in C\x_{F_2} C\,\bigr|\,c_1(r_B)\textrm{ or } c_2(r_B)\in\BB_0\bigr\}.
\eea
In particular, we define the diagonals of $\DD$ and $\BB$ as follows:
\bean
\bullet&\quad\triangle\DD:=\bigr\{(c,c)(r_D)\in C\x_{F_2} C\,\bigr|\,c(r_D)\in\DD_0\bigr\},\\[0.5ex]
\bullet&\quad\triangle\BB:=\bigr\{(c,c)(r_B)\in C\x_{F_2} C\,\bigr|\,c(r_B)\in\BB_0\bigr\}.
\eea
It is easy to see that $C\x_{F_2} C$ is topologically nothing but an union of closed intervals and wedge sums of closed intervals where points in $\triangle\DD\cup\triangle\BB$ are identified. Furthermore the following discrete set $\VV_\CC$ induced by the cerf tuple $\CC$ endows a natural graph structure on $C\x_{F_2}C$.
$$
\VV_\CC:=\DD\cup\BB\subset C\x_{F_2} C.
$$
\begin{figure}[htb]\label{graph_str}
\input{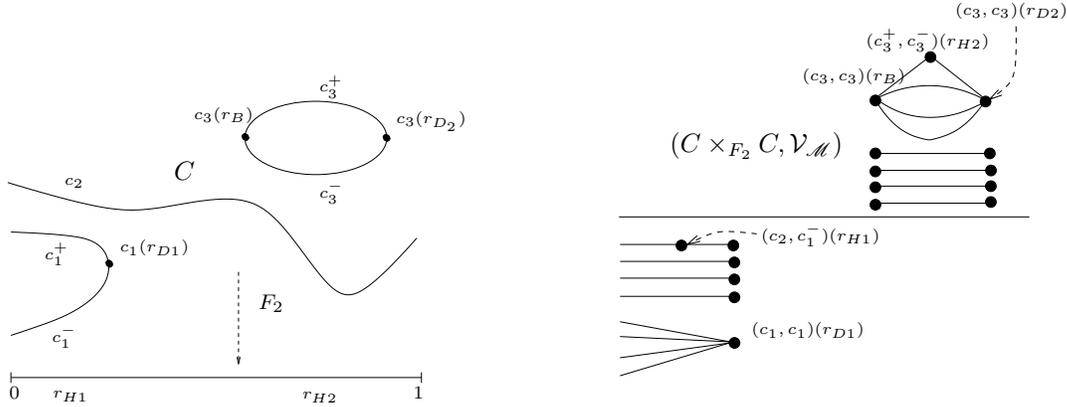}\caption{Graph structure}
\end{figure}

\subsection{Axioms on $\gamma$}

\begin{Def}
A function
$$
\gamma:C\x_{F_2} C\pf\F
$$
is called the {\em flow line counter} if it is a step function on a supergraph $(C\x_{F_2}C,\VV_{\mathscr{M}})$ of $(C\x_{F_2}C,\VV_\CC)$ such that the following holds. There exists function $\delta:\HH:=\VV_{\mathscr{M}}\setminus\VV_\CC\pf\F$ such that $\gamma$ together with $\delta$ satisfy the following five axioms $(\gamma1)-(\gamma5)$. The set $\HH$ is called the {\em set of handle-slides} and the function $\delta$ is called the {\em jump function}. We set $\gamma=0$ for convention when $\gamma$ has the infinite value in $\F$.
\end{Def}
We write $\gamma_r(c_1,c_2)=\gamma(c_1,c_2)(r)$ and  $\delta_{r_H}(c_H^+,c_H^-)=\delta(c_H^+,c_H^-)(r_H)$ for brevity.
\begin{Rmk}
As can be seen in $(\gamma3)$, the jump function $\delta$ measures the discontinuity of $\gamma$ at $\HH$.
\end{Rmk}

The important data is the value of $\gamma$ on edges of $C\x_{F_2} C$. $\gamma$ is constant on each edges, but the value may jump at $\VV_{\mathscr{M}}=\DD\cup\BB\cup\HH$. Thus we need to examine how the value of $\gamma$ changes at $\VV_{\mathscr{M}}$. For such a reason, we define the approximated value of $\gamma$ to compare the value of $\gamma$ before and after $\Lambda$ defined by
$$
\Lambda:=\{r\in [0,1]\,|\, (c_1,c_2)(r)\in\VV_\MM\}.
$$
In particular we indicates the type of degeneracies of {\em degenerate points} in $\Lambda$ as below:
\bean
&\bullet\quad F_2(\DD):=\{r_D\in[0,1]\,|\,(c_1,c_2)(r_D)\in\DD\},\\[0.5ex]
&\bullet\quad F_2(\BB):=\{r_B\in[0,1]\,|\,(c_1,c_2)(r_B)\in\BB\},\\[0.5ex]
&\bullet\quad F_2(\HH):=\{r_H\in[0,1]\,|\,(c_1,c_2)(r_H)\in\HH\}.
\eea
We assume that those points are \textbf{disjoint} in $[0,1]$. By definition,
$$
\Lambda=F_2(\DD)\cup F_2(\BB)\cup F_2(\HH)\subset[0,1].\\
$$

\begin{Def}
$(c_1,c_2)(r-\epsilon)\in C\x_\pi C$ is the {\em left approximation} of $(c_1,c_2)(r)\in C\x_\pi C$ if
\begin{itemize}
\item $\epsilon>0$,
\item $\lim_{\epsilon\to0}(c_1,c_2)(r-\epsilon)=(c_1,c_2)(r).$
\end{itemize}
Then we define for the left approximation of $(c_1,c_2)(r)$,
$$
\gamma_{r}^-(c_1,c_2):=\lim_{\epsilon\to0}\gamma_{r-\epsilon}(c_1,c_2).
$$
Since the non-degenerate function $\gamma$ is constant on each edges, $\gamma^-_r(c_1,c_2)$ is well-defined.
Analogously, we also define $(c_1,c_2)(r+\epsilon)$ the {\em right approximation} of $(c_1,c_2)(r)\in C\x_\pi C$ together with $\gamma_r^+(c_1,c_2)\in\F$.
\end{Def}

The non-degenerate function $\gamma$ satisfies the following five axioms.
\begin{itemize}
\item[($\gamma1$)] For $r\in[0,1]\setminus\Lambda$, $\gamma_r(c_1,c_2)=0$ if $F_3(c_2)\geq F_3(c_1)$;\\[1ex]
for $(c_1,c_2)(r_H)\in\HH$, $\delta_{r_H}(c_1,c_2)=0$ if $F_3(c_2)\geq F_3(c_1)$.\\
\item[($\gamma2$)] For $(c_1,c_3)(r)\in C\x_{F_2}C$, $r\in[0,1]\setminus\Lambda$,
$$
\sum_{c_2\subset C}\gamma_r(c_1,c_2)\gamma_r(c_2,c_3)=0.
$$
\item[($\gamma3$)] At $r_H\in F_2(\HH)$, the following holds.
$$
\gamma_{r_H}^+(c_1,c_3)=\gamma^-_{r_H}(c_1,c_3)+\!\!\!\!\!\!\!\sum_{\substack{c_2\subset C;\\(c_1,c_2)(r_H)\in\HH}}\!\!\!\!\!\!\!\delta_{r_H}(c_1,c_2)\gamma^-_{r_H}(c_2,c_3)
-\!\!\!\!\!\!\!\sum_{\substack{c_2\subset C;\\(c_2,c_3)(r_H)\in\HH}}\!\!\!\!\!\!\!\delta_{r_H}(c_2,c_3)\gamma^+_{r_H}(c_1,c_2).
$$
\item[($\gamma4$)] At $r_B\in F_2(\BB)$, $\gamma_{r_B}^+(c_B^+,c_B^-)$ is an invertible element in $\F$ and the following holds.
\bean
\gamma_{r_B}^+(c_1,c_2)&=\gamma^-_{r_B}(c_1,c_2)+\gamma^+_{r_B}(c_1,c_B^-)\gamma^+_{r_B}(c_B^+,c_B^-)^{-1}\gamma^+_{r_B}(c_B^+,c_2);\\
\gamma_{r_B}^+(c_1,c_B^+)&=\gamma_{r_B}^+(c_B^-,c_1)=\gamma_{r_B}^+(c_B^+,c_2)=0,\quad c_2\neq c_B^-.
\eea
\item[($\gamma5$)] At $r_D\in F_2(\DD)$, $\gamma_{r_D}^-(c_D^+,c_D^-)$ is an invertible element in $\F$ and the following holds.
\bean
\gamma_{r_D}^+(c_1,c_2)&=\gamma^-_{r_D}(c_1,c_2)-\gamma^-_{r_D}(c_1,c_D^-)\gamma^-_{r_D}(c_D^+,c_D^-)^{-1}\gamma^-_{r_D}(c_D^+,c_2);\\
\gamma_{r_D}^-(c_1,c_D^+)&=\gamma_{r_D}^-(c_D^-,c_1)=\gamma_{r_D}^-(c_D^+,c_2)=0,\quad c_2\neq c_D^-.
\eea
\end{itemize}
\begin{Rmk}
$(\gamma1)$ guarantees that $\gamma_r(c,c)=0$, $\gamma^+_{r_B}(c_B^-,c_B^+)=0$, and $\gamma^-_{r_D}(c_D^-,c_D^+)=0$. By the properness of $F_3$, the formula in $(\gamma2)$ is a finite sum, i.e. for fixed $(c_1,c_3)(r)\in C\x_{F_2} C$, there are only finitely many $c_2\in F_2^{-1}(r)$ such that $\gamma_r(c_1,c_2)\gamma_r(c_2,c_3)$ is nonzero.
\end{Rmk}
\begin{Rmk}
In the Morse theoretic framework, the axioms and functions can be interpreted as the following.
\begin{itemize}
\item $\delta$ counts the number of degenerate gradient flow lines between critical points with same indices (non-generic phenomenon).
\item $\gamma$ counts the number of gradient flow lines.
\item $(\gamma1)$ implies that the action value decreases along gradient flow lines.
\item $(\gamma2)$ means that the function $\gamma$ gives the boundary operator of the Morse chain complex by counting gradient flow lines.
\item $(\gamma3)-(\gamma5)$ signify how the value of $\gamma$ (or the boundary operator) changes at degenerate points in $\Lambda$, see Figure \ref{fig:Handle-slide} and Figure \ref{fig:birth}.
\end{itemize}
\end{Rmk}

\subsection{Morse homology}
\begin{Def}
We call a pair $\mathscr{M}=(\CC,\gamma)$ the {\em Morse tuple} which consists of a Cerf tuple $\CC=(C,F)$ and a flow line counter $\gamma$ on $\CC$.
\end{Def}
First of all, we define the following set which is finite by the properness of $F_3$. For $a\leq b\in\R$, $r\in[0,1]\setminus\Lambda$,
$$
C^{(a,b)}(\MM,r):=\big\{c(r)\in C\,\big|\, F_3(c(r))\in (a,b)\big\}.
$$
Then we have the following $\F$-module by tensoring $\F$.
$$
\CM^{(a,b)}(\MM,r):=C^{(a,b)}(\MM,r)\otimes\F.
$$
Next, we define a boundary operator $\p$ using $\gamma$.
\bean
\p_r^{(a,b)}:\CM^{(a,b)}(\MM,r)&\pf \CM^{(a,b)}(\MM,r)\\
c_1(r)&\longmapsto\sum_{c_2\subset C}\gamma_r(c_1,c_2)\cdot c_2(r).
\eea
Recall that we set $\gamma_r(c_1,c_2)=0$ if it equals to infinity. We note that $(\CM^{(a,b)}(\MM,r),\p_r^{(a,b)})$ is indeed a chain complex due to Axiom $(\gamma2)$; therefore, we get {\em filtered Morse homology}:
$$
\HM^{(a,b)}(\MM,r):=\H\big(\CM^{(a,b)}(\MM,r),\p_r^{(a,b)}\big),\qquad r\in[0,1]\setminus\Lambda
$$
and then taking direct and inverse limits, we obtain {\em (full) Morse homology}:
$$
\HM(\MM,r):=\lim_{\substack{\pf\\b\to\infty}}\lim_{\substack{\longleftarrow\\a\to-\infty}}\HM^{(a,b)}(\MM,r),\qquad r\in[0,1]\setminus\Lambda.
$$

\subsection{Invariance}
Thanks to the fact that $\gamma$ is constant at each edges, we easily derive the invariance property of Morse homology on a non-degenerate interval.
$$
\HM(\MM,r_1)\cong\HM(\MM,r_2)\quad\textrm{whenever }\,\,[r_1,r_2]\cap\Lambda=\emptyset.
$$

In next three propositions, we shall show that Morse homology is unchanged even after a handle-slide and a birth-death. Due to the axioms $(\gamma_3)$, $(\gamma_4)$, and $(\gamma_5)$ together with $(\CC2)$, we know that how Morse chain and the boundary operator vary by passing through those degenerate points. In fact, Lee \cite{Lee1,Lee2} completes all the necessary analysis of the bifurcation method in Floer theory argued originally by Floer \cite{Fl1}; accordingly she proved that all axioms and hypotheses of this article hold in Floer theory, but she did not explicitly prove the invariance property even though it immediately follows. Instead, she concerned with the torsion invariants in Floer theory, (see the introduction). Usher \cite{Us} stated and proved the invariance property described below.

\begin{Prop}\cite{Lee1,Lee2,Us}
If $[r_0,r_1]\cap\Lambda=\{r_H\in F_2(\HH)\}$, $\HM(\MM,r_0)\cong\HM(\MM,r_1)$.
\end{Prop}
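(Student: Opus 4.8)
The plan is to show that crossing $r_H$ changes neither the chain modules nor the action filtration, and only conjugates the boundary operator by an explicit automorphism built from the jump function $\delta$.

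First I would fix the setup. Since $[r_0,r_1]\cap\Lambda=\{r_H\}$ and $r_H\in F_2(\HH)$, no birth or death occurs in $[r_0,r_1]$, so along $[r_0,r_1]$ each relevant component of $C$ is defined over the whole interval and the generating sets $C^{(a,b)}(\MM,r)$ are canonically identified for all $r\in[r_0,r_1]$ (follow the component $c\subset C$); write $\CM^{(a,b)}$ for the common module, realised as the subquotient $\CM^{(-\infty,b)}/\CM^{(-\infty,a)}$, and run the argument over a cofinal family of windows $(a,b)$. Because $\gamma$ is constant on edges, the boundary operator on $\CM^{(a,b)}$ equals $\partial^-:=\partial^{-,(a,b)}_{r_H}$ throughout $[r_0,r_H)$ and equals $\partial^+:=\partial^{+,(a,b)}_{r_H}$ throughout $(r_H,r_1]$. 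Hence $\HM^{(a,b)}(\MM,r_0)=\H(\CM^{(a,b)},\partial^-)$ and $\HM^{(a,b)}(\MM,r_1)=\H(\CM^{(a,b)},\partial^+)$, and it suffices to produce an isomorphism between these that is natural in $(a,b)$.

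Next I would define the handle-slide automorphism. Put
\[
n(c_1):=\sum_{\substack{c_2\subset C\\ (c_1,c_2)(r_H)\in\HH}}\delta_{r_H}(c_1,c_2)\,c_2,
\]
a finite sum on $\CM^{(a,b)}$ by properness of $F_3$. By $(\gamma1)$, $\delta_{r_H}(c_1,c_2)=0$ unless $F_3(c_2)<F_3(c_1)$, so $n$ preserves the action sublevel filtration, descends to the subquotient $\CM^{(a,b)}$, and there strictly lowers $F_3$ on the finite generating set; in particular $n$ is nilpotent and $\Phi:=\Id+n$ is an automorphism with inverse $\sum_{k\ge0}(-n)^{k}$. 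Now I would rewrite axiom $(\gamma3)$ in operator form: reading off the coefficient of $c_3$, the first correction sum in $(\gamma3)$ is precisely $(\partial^-\!\circ n)(c_1)$ and the second is $(n\circ\partial^+)(c_1)$, so $(\gamma3)$ becomes $\partial^+=\partial^-+\partial^- n-n\partial^+$, equivalently $(\Id+n)\circ\partial^+=\partial^-\circ(\Id+n)$. Thus $\Phi\colon(\CM^{(a,b)},\partial^+)\to(\CM^{(a,b)},\partial^-)$ is an isomorphism of chain complexes, whence $\HM^{(a,b)}(\MM,r_1)\cong\HM^{(a,b)}(\MM,r_0)$. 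Since the formula for $\Phi$ is independent of the window and $n$ respects the filtration, $\Phi$ commutes with the structure maps of the system $\{\HM^{(a,b)}\}$ in both the $a$- and the $b$-direction, so it descends to the direct and inverse limits: $\HM(\MM,r_1)\cong\HM(\MM,r_0)$.

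The step I expect to be the main obstacle is the operator translation of $(\gamma3)$: one must pair the two bracketed sums with \emph{post-}composition ($\partial^-\!\circ n$) and \emph{pre-}composition ($n\circ\partial^+$) by $n$ in exactly the right order, and then spot the cancellation that collapses the three-term relation into the single conjugation identity $\Phi\partial^+=\partial^-\Phi$ — in particular noticing that it is $\partial^+$, not $\partial^-$, that enters the term producing $n\partial^+$. A secondary point of care is verifying that the generating set really is $r$-independent on $[r_0,r_1]$ and that $\Phi$ descends to, and is natural for, the subquotient complexes $\CM^{(a,b)}$, where the limits defining $\HM$ are formed.
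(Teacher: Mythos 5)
Your map $\Phi=\Id+n$ is exactly the paper's chain map $A$ (the identity plus the $\delta_{r_H}$-correction), and your operator form of $(\gamma3)$, $\Phi\circ\partial^+=\partial^-\circ\Phi$, is precisely the coefficientwise computation the paper carries out, so this is essentially the same proof; your nilpotence argument even supplies the invertibility of $A$ that the paper only asserts. The sole presentational difference is that the paper handles your ``secondary point of care'' about generators crossing a fixed action window by choosing continuous window functions $a(r),b(r)$ whose images avoid the Cerf diagram, rather than arguing over a cofinal family of fixed windows.
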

\begin{proof}
We choose continuous functions $a(r),\,b(r):[0,1]\pf\R$ such that the images of $a$ and $b$ do not intersect with the Cerf diagram.
We set the map
\bea\label{eq:chain map at HS}
A:\CM^{(a(r_1),b(r_1))}(\MM,r_1)&\pf \CM^{(a(r_0),b(r_0))}(\MM,r_0)\\
c(r_1)&\longmapsto c(r_0)+\!\!\!\sum_{\substack{c_H^-\subset C;\\(c,c_H^-)(r_H)\in\HH}}\!\!\!\delta_{r_H}(c,c_H^-)\cdot c_H^-(r_0).
\eea
Since $A$ is invertible, it suffices to show that $A$ is a chain map then it gives an isomorphism on the homology level. We abbreviate $\p_{r_0}^{(a(r_0),b(r_0))}$ resp. $\p_{r_1}^{(a(r_1),b(r_1))}$ by $\p_-$ resp. $\p_+$.\\[-2ex]

\noindent\underline{Claim}: $A$ is a chain map, i.e. $A\circ\p_+=\p_-\circ A$.\\[-2ex]

\noindent{Proof of the claim}. We compute for $c(r_1)\in C^{(a(r_1),b(r_1))}(\MM,r_1)$,
\bean
A\circ\p_+&-\p_-\circ A(c(r_1))\\
&=A\Big(\sum_{c'\subset C}\gamma_{r_1}(c,c')\cdot c'(r_1)\Big)-\p_-\Big(c(r_0)+\sum_{c_H^-\subset C}\delta_{r_H}(c,c_H^-)\cdot c_H^-(r_0)\Big)\\
&=\sum_{c'\subset C}\gamma_{r_1}(c,c')\cdot\Big(c'(r_0)+\sum_{c_H^-\subset C}\delta_{r_H}(c',c_H^-)\cdot c_H^-(r_0)\Big)\\
&\quad-\sum_{c'\subset C}\Big(\gamma_{r_0}(c,c')-\sum_{c_H^-\subset C}\delta_{r_H}(c,c_H^-)\gamma_{r_0}(c_H^-,c')\Big)\cdot c'(r_0)\\
&=0
\eea
The last equality follows from the axiom $(\gamma3)$ and this computation finishes the proof of the claim, hence the proposition.
\end{proof}

\begin{Prop}\cite{Lee1,Lee2,Us}
If $[r_0,r_1]\cap\Lambda=\{r_B\in F_2(\BB)\}$, $\HM(\MM,r_0)\cong\HM(\MM,r_1)$.
\end{Prop}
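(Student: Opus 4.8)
The plan is to follow the set-up in the proof of the previous proposition, but since a birth changes the number of generators by two, to replace the invertible chain map used there by a chain map with \emph{acyclic kernel}. Fix conventions so that $r_B$ is a birth: the two branches $c_B^+,c_B^-$ of the component of $C$ on which $F_2$ has a local minimum at $r_B$ are defined only for $r\geq r_B$ and emanate from a common cusp point, which lies on the Cerf diagram; by $(\gamma1)$ and the invertibility assumption in $(\gamma4)$ one has $F_3(c_B^-(r))<F_3(c_B^+(r))$ for $r>r_B$ near $r_B$. As in the previous proof, choose continuous $a(r),b(r)\colon[r_0,r_1]\to\R$ whose graphs miss the Cerf diagram; then no edge of $C$ crosses the graph of $a$ or of $b$ over $[r_0,r_1]$, so since the cusp point is off these graphs, either both $c_B^\pm(r_1)$ lie in the window $(a(r_1),b(r_1))$ or neither does.

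If neither does, then the generators of $\CM^{(a(r_1),b(r_1))}(\MM,r_1)$ and of $\CM^{(a(r_0),b(r_0))}(\MM,r_0)$ are identified by continuation along edges, and their boundary operators agree: for old generators $c_1,c_2$ the identity $(\gamma4)$ reads $\gamma^+_{r_B}(c_1,c_2)=\gamma^-_{r_B}(c_1,c_2)$ because $\gamma^+_{r_B}(c_B^+,c_2)=0$ for $c_2\neq c_B^-$. So at such a window there is nothing to prove. Assume therefore that both $c_B^\pm(r_1)$ are generators at $r_1$, so that as $\F$-modules $\CM^{(a(r_1),b(r_1))}(\MM,r_1)=\CM^{(a(r_0),b(r_0))}(\MM,r_0)\oplus\langle c_B^+(r_1),c_B^-(r_1)\rangle$, the first summand being spanned by the old generators. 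I claim $\langle c_B^+(r_1),c_B^-(r_1)\rangle$ is an acyclic subcomplex: by $(\gamma4)$ (using that $\gamma^+_{r_B}(c_B^-,\,\cdot\,)\equiv0$ and $\gamma^+_{r_B}(c_B^+,c_2)=0$ for $c_2\neq c_B^-$) we get $\p_{r_1}c_B^-(r_1)=0$ and $\p_{r_1}c_B^+(r_1)=\gamma^+_{r_B}(c_B^+,c_B^-)\,c_B^-(r_1)$ with invertible coefficient, so this two-term complex has vanishing homology.

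Next I would identify the quotient. The complex $\CM^{(a(r_1),b(r_1))}(\MM,r_1)/\langle c_B^+(r_1),c_B^-(r_1)\rangle$ has underlying $\F$-module $\CM^{(a(r_0),b(r_0))}(\MM,r_0)$, and for an old generator $c_1$ the induced boundary is $\sum_{c_2\ \mathrm{old}}\gamma^+_{r_B}(c_1,c_2)\,c_2$; by $(\gamma4)$ together with $\gamma^+_{r_B}(c_B^+,c_2)=0$ for $c_2\neq c_B^-$ this equals $\sum_{c_2}\gamma^-_{r_B}(c_1,c_2)\,c_2=\p_{r_0}c_1$. Hence the projection $\CM^{(a(r_1),b(r_1))}(\MM,r_1)\to\CM^{(a(r_0),b(r_0))}(\MM,r_0)$ is a chain map whose kernel is the acyclic complex above, so by the long exact sequence of that short exact sequence it is a quasi-isomorphism and $\HM^{(a(r_1),b(r_1))}(\MM,r_1)\cong\HM^{(a(r_0),b(r_0))}(\MM,r_0)$. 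Since this projection (and the identity, in the other case) is plainly natural under enlarging the window, the isomorphisms pass to the direct and inverse limits defining full Morse homology, giving $\HM(\MM,r_1)\cong\HM(\MM,r_0)$.

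The conceptual content — cancelling a birth against an acyclic two-term subcomplex — is standard, so the real work is the bookkeeping: checking that the window functions can be chosen so that $c_B^+$ and $c_B^-$ enter the filtration together, matching old generators across $r_0$ and $r_1$, and above all verifying that the correction term in $(\gamma4)$ drops out on the quotient so that the induced boundary is exactly $\p_{r_0}$. I expect this last verification, which is where $(\gamma4)$ is used in full, to be the main point; the passage to the limit is routine given the naturality of the projection.
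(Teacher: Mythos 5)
Your proposal is correct relative to the axioms as literally stated, and it takes a genuinely different route from the paper. The paper proves the proposition by writing down explicit maps $i$ and $p$ between $\CM^{(a(r_0),b(r_0))}(\MM,r_0)$ and $\CM^{(a(r_1),b(r_1))}(\MM,r_1)$, checking by hand from $(\gamma2)$ and $(\gamma4)$ that they are chain maps, and producing a chain homotopy $D$ with $p\circ i=\id$ and $i\circ p\simeq\Id$; you instead exhibit $\F\langle c_B^+(r_1),c_B^-(r_1)\rangle$ as an acyclic two-term subcomplex and invoke the long exact sequence of $0\to\F\langle c_B^\pm(r_1)\rangle\to\CM^{(a(r_1),b(r_1))}(\MM,r_1)\to\CM^{(a(r_0),b(r_0))}(\MM,r_0)\to0$. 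Your route is shorter; what the paper's route buys is robustness, and that is the one caveat you should register. Your argument uses the clause $\gamma^+_{r_B}(c_B^+,c_2)=0$ for $c_2\neq c_B^-$ twice: once so that the span of the newborn pair is $\p_{r_1}$-invariant, and once so that the correction term in the first identity of $(\gamma4)$ drops out and $\gamma_{r_1}$ agrees with $\gamma_{r_0}$ on old pairs. The paper's computations never invoke that clause: its $p$ sends $c_B^-(r_1)\mapsto-\sum_{c\subset C_-}\gamma^+_{r_B}(c_B^+,c_B^-)^{-1}\gamma_{r_1}(c_B^+,c)\cdot c(r_0)$ and its $i$ carries the term $\gamma_{r_1}(c,c_B^-)\gamma^+_{r_B}(c_B^+,c_B^-)^{-1}\cdot c_B^+(r_1)$, precisely so as to remain valid when the newborn upper generator does flow into old generators at $r_1$, which is what one expects generically in the Morse/Floer bifurcation analysis of Hutchings and Lee that these axioms model. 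If that vanishing clause is weakened, your two-generator span is no longer a subcomplex and the naive projection is not a chain map; the repair is to take instead the acyclic subcomplex $\F\langle c_B^+(r_1),\p_{r_1}c_B^+(r_1)\rangle$, and then the induced identification of the quotient with $\big(\CM^{(a(r_0),b(r_0))}(\MM,r_0),\p_{r_0}\big)$ is exactly the paper's $p$, with the full correction term of $(\gamma4)$ now genuinely needed --- i.e.\ one is led back to the paper's computation.

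The remaining ingredients of your write-up are fine and essentially as in the paper: the observation that, because the window functions $a(r),b(r)$ avoid the Cerf diagram, the two newborn generators enter or miss the action window together (the case where they miss it being trivial), the matching of old generators across $[r_0,r_1]$, and the passage to the direct and inverse limits.
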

\begin{proof}
We choose again continuous functions $a(r),\,b(r):[0,1]\pf\R$ such that the images of $a$ and $b$ do not intersect with the Cerf diagram. We abbreviate $C^{(a(r_0),b(r_0))}(\MM,r_0)$ resp. $C^{(a(r_1),b(r_1))}(\MM,r_1)$ by $C_-$ resp. $C_+$. We note that there exists the natural bijection
\bean
\Psi:C_-\cup\{c^+_B(r_1),c^-_B(r_1)\}&\pf C_+\\[1ex]
c(r_0)\in C_-&\longmapsto c(r_1)\\
c^\pm_B(r_1)&\longmapsto c^\pm_B(r_1)
\eea
and it gives the isomorphism
$$
\CM^{(a(r_1),b(r_1))}(\MM,r_1)\cong\CM^{(a(r_0),b(r_0))}(\MM,r_0)\oplus\F\langle c^+_B(r_1),c^-_B(r_1)\rangle.
$$
For convenience we identify $C_-$ with $\Psi(C_-)\subset C_+$; but one can easily distinguish elements in $C_-$ or $\Psi(C_-)$ by the parameters $r_0$ or $r_1$. We set the chain maps:
\bean
i:\CM^{(a(r_0),b(r_0))}(\MM,r_0)&\pf\CM^{(a(r_1),b(r_1))}(\MM,r_1)\\
c(r_0)&\longmapsto c(r_1)-\gamma_{r_1}(c,c^-_B)\gamma_{r_B}^+(c^+_B,c^-_B)^{-1}\cdot c^+_B(r_1)\\[1ex]
p:\CM^{(a(r_1),b(r_1))}(\MM,r_1)&\pf\CM^{(a(r_0),b(r_0))}(\MM,r_0)\\
c(r_1)\in C_-&\longmapsto c(r_0)\\
c^+_B(r_1)&\longmapsto 0\\
c^-_B(r_1)&\longmapsto -\sum_{c\subset C_-}\gamma_{r_B}^+(c^+_B,c^-_B)^{-1}\gamma_{r_1}(c^+_B,c)\cdot c(r_0)
\eea
where $\gamma_{r_B}^+(c_B^+,c_B^-)^{-1}$ is the inverse of $\gamma_{r_B}^+(c_B^+,c_B^-)$ in $\F$.\\[-2ex]

\noindent\underline{Claim 1}: $i$ and $p$ are indeed chain maps, namely $\p_+ \circ i=i\circ\p_-$ and $\p_-\circ p=p\circ\p_+$.\\[-2ex]

\noindent{Proof of Claim 1}. Using the axioms $(\gamma_2)$ and $(\gamma4)$, we compute that for any $c(r_0)\in C_-$,
\bea\label{eq:vanishing eq1}
\sum_{c'\subset C_-}\gamma_{r_0}(c,c')\gamma_{r_1}(c',c_B^-)&=\sum_{c'\subset C_-}\gamma_{r_1}(c,c')\gamma_{r_1}(c',c_B^-)\\
&\quad-\sum_{c'\subset C_-}\gamma_{r_1}(c,c_B^-)\gamma_{r_B}^+(c_B,c_B^-)\gamma_{r_1}(c_B^+,c')\gamma_{r_1}(c',c_B^-)\\
&=0.
\eea
Similarly, we also can show that for $c(r_1)\in C_-$,
\beq\label{eq:vanishing eq2}
\sum_{c'\subset C_-}\gamma_{r_1}(c_B^+,c')\gamma_{r_0}(c',c)=0.
\eeq
With the axioms $(\gamma_2)$ and $(\gamma4)$ again, we calculate for $c(r_0)\in C_-$,
\bean
\bullet\quad & i\,\circ\,  \p_--\p_+\circ i\,(c(r_0))\\
&=i\bigr(\sum_{c'\subset C_-}\gamma_{r_0}(c,c')\cdot c'(r_0)\bigr)-\p_+\bigr(c(r_1)+\gamma_{r_1}(c,c_B^-)\gamma_{r_B}^+(c_B^+,c_B^-)^{-1}\cdot c_B^+(r_1)\bigr)\\
&=\sum_{c'\subset C_-}\gamma_{r_0}(c,c')\cdot c'(r_1)+\underbrace{\sum_{c'\subset C_-}\gamma_{r_0}(c,c')\gamma_{r_1}(c',c_B^-)}_{=0 \textrm{ by }\eqref{eq:vanishing eq1}}\gamma_{r_B}^+(c_B^+,c_B^-)^{-1}\cdot c_B^+(r_1)\\
&\quad-\sum_{c'\subset C_+}\gamma_{r_1}(c,c')\cdot c'(r_1)+\sum_{c'\subset C_+}\gamma_{r_1}(c,c_B^-)\gamma_{r_B}^+(c_B^+,c_B^-)^{-1}\gamma_{r_1}(c_B^+,c')\cdot c'(r_1)\\
&=\sum_{c'\subset C_-}\gamma_{r_0}(c,c')\cdot c'(r_1)-\sum_{c'\subset C_-}\gamma_{r_1}(c,c')\cdot c'(r_1)+\gamma_{r_1}(c,c_B^-)\cdot c_B^-(r_1)\\
&\quad +\sum_{c'\subset C_+}\gamma_{r_1}(c,c_B^-)\gamma_{r_B}^+(c_B^+,c_B^-)^{-1}\gamma_{r_1}(c_B^+,c')\cdot c'(r_1)\\
&=\sum_{c'\subset C_-}\gamma_{r_0}(c,c')\cdot c'(r_1)-\sum_{c'\subset C_-}\big\{\gamma_{r_0}(c,c')-\gamma_{r_1}(c,c_B^-)\gamma_{r_B}^+(c_B^+,c_B^-)^{-1}\gamma_{r_1}(c_B^+,c')\big\}\cdot c'(r_1)\\
&\quad -\gamma_{r_1}(c,c_B^-)\cdot c_B^-(r_1)+\sum_{c'\subset C_-}\gamma_{r_1}(c,c_B^-)\gamma_{r_B}^+(c_B^+,c_B^-)^{-1}\gamma_{r_1}(c_B^+,c')\cdot c'(r_1)\\
&\quad+\gamma_{r_1}(c,c_B^-)\gamma_{r_B}^+(c^+_B,c^-_B)^{-1}\gamma_{r_B}^+(c^+_B,c^-_B)\cdot c^-_B(r_1)\\
&=0.
\eea
The fourth equality follows from $(\gamma4)$. Similarly, we show $p\circ\p_+=\p_-\circ p$ for $c(r_1)\in C_-$, $c_B^-(r_1)$, and $c_B^+(r_1)$.
{\setlength\arraycolsep{2pt}
\begin{eqnarray*}
&\bullet\quad p\circ\p_+(c(r_1))&=p\bigr(\sum_{c'\subset C_+}\gamma_{r_1}(c,c')\cdot c'(r_1)\bigr)\\
&\quad&=p\bigr(\sum_{c'\subset C_-}\gamma_{r_1}(c,c')\cdot c'(r_1)+\gamma_{r_1}(c,c_B^-)\cdot c_B^-(r_1)\bigr)\\
&\quad&=\sum_{c'\subset C_-}\gamma_{r_1}(c,c')\cdot c'(r_0)-\sum_{c'\subset C_-}\gamma_{r_B}^+(c_B^+,c_B^-)^{-1}\gamma_{r_1}(c^+_B,c')\gamma_{r_1}(c,c_B^-)\cdot c'(r_0)\\
&\quad&=\sum_{c'\subset C_-}\gamma_{r_0}(c,c')\cdot c'(r_0)\\
&\quad&=\p_-(c(r_0))\\
&\quad&=\p_-\circ p(c(r_1)).\\[2ex]
\end{eqnarray*}
\begin{eqnarray*}
&\bullet\quad \p_-\circ p(c_B^-(r_1))&=\p_-\bigr(-\sum_{c'\subset C_-}\gamma_{r_B}^+(c_B^+,c_B^-)^{-1}\gamma_{r_1}(c_B^+,c')\cdot c'(r_0)\bigr)\\
&\quad&=-\gamma_{r_B}^+(c_B^+,c_B^-)^{-1}\sum_{c''\subset C_-}\underbrace{\sum_{c'\subset C_-}\gamma_{r_1}(c_B^+,c')\gamma_{r_0}(c',c'')}_{=0 \textrm{ by } \eqref{eq:vanishing eq2}} \cdot c''(r_0)\\
&\quad&=0\\
&\quad&=p\circ \p_+(c_B^-(r_1)).\\[2ex]
&\bullet\quad p\circ\p_+(c_B^+(r_1))&=p\bigr(\sum_{c'\subset C_+}\gamma_{r_1}(c_B^+,c')\cdot c'(r_1)\bigr)\\
&\quad&=p\bigr(\sum_{c''\subset C_-}\gamma_{r_1}(c_B^+,c'')\cdot c''(r_1)+\gamma_{r_B}^+(c_B^+,c_B^-)\cdot c_B^-(r_1)\bigr)\\
&\quad&=\sum_{c''\subset C_-}\gamma_{r_1}(c_B^+,c'')\cdot c''(r_0)-\sum_{c''\subset C_-}\gamma_{r_B}^+(c_B^+,c_B^-)^{-1}\gamma_{r_1}(c_B^+,c'')\gamma_{r_B}^+(c_B^+,c_B^-)\cdot c''(r_0)\\
&\quad&=0\\
&\quad&=\p_-\circ p(c_B^+(r_1)).
\end{eqnarray*}}

\noindent\underline{Claim 2}: $i$ and $p$ are chain homotopic.\\[-2ex]

\noindent{Proof of Claim 2}.
It obviously holds that $p\circ i=\id$. Thus it remains to show that $i\circ p\simeq\Id$, so we set the chain homotopy $D$ below.
{\setlength\arraycolsep{2pt}
\bea
D:\CM^{(a(r_1),b(r_1))}(\MM,r_1)&\pf\CM^{(a(r_1),b(r_1))}(\MM,r_1)\\
c(r_1),c_B^+(r_1)&\longmapsto 0\\
c_B^-(r_1)&\longmapsto \gamma_{r_B}^+(c_B^+,c_B^-)^{-1}\cdot c_B^+(r_1)
\eea
Then the following three simple calculations complete the proof of Claim 2 and hence the proposition. For $c(r_1)\in C_-$, and $c_B^\pm(r_1)$, we compute
\begin{eqnarray*}
&\bullet\quad\p^+\circ D+D\circ\p^+(c(r_1))&=0+D\bigr(\sum_{c'\subset C^-}\gamma_{r_1}(c,c')\cdot c'(r_1)+\gamma_{r_1}(c,c_B^-)\cdot c_B^-(r_1)\bigr)\\
&\quad&=\gamma_{r_1}(c,c_B^-)\gamma_{r_B}^+(c_B^+,c_B^-)^{-1}\cdot c_B^+(r_1)\\
&\quad&=c(r_1)-\bigr(c(r_1)-\gamma_{r_1}(c,c_B^-)\gamma_{r_B}^+(c_B^+,c_B^-)^{-1}\cdot c_B^+(r_1)\bigr)\\
&\quad&=\Id-i\circ p(c(r_1)).\\[2ex]
&\bullet\quad\p^+\circ D+D\circ\p^+(c_B^+(r_1))&=0+D\bigr(\sum_{c'\subset C_-}\gamma_{r_1}(c_B^+,c')\cdot c'(r_1)+\gamma_{r_B}^+(c_B^+,c_B^-)\cdot c_B^-(r_1)\bigr)\\
&\quad&=\gamma_{r_B}^+(c_B^+,c_B^-)\gamma_{r_B}^+(c_B^+,c_B^-)^{-1}\cdot c_B^+(r_1)\\
&\quad&=c_B^+(r_1)\\
&\quad&=\Id-i\circ p(c_B^+(r_1)).
\end{eqnarray*}
\begin{eqnarray*}
&\bullet\quad\p^+\circ D+D\circ\p^+(c_B^-(r_1))&=\gamma_{r_B}^+(c_B^+c_B^-)^{-1}\cdot c_B^+(r_1)+0\\
&\quad&=\sum_{c'\subset C_+}\gamma_{r_B}^+(c_B^+,c_B^-)^{-1}\gamma_{r_1}(c_B^+,c')\cdot c'(r_r)\\
&\quad&\quad+\gamma_{r_B}^+(c_B^+,c_B^-)^{-1}\gamma_+(c_B^+,c_B^-)\cdot c_B^-(r_1)\\
&\quad&=\sum_{c'\subset C_-}\gamma_{r_B}^+(c_B^+,c_B^-)^{-1}\gamma_{r_1}(c_B^+,c')\cdot c'(r_1)+c_B^-(r_1)\\
&\quad&=c_B^-(r_1)-\bigr(-\sum_{c'\subset C_-}\gamma_{r_B}^+(c_B^+,c_B^-)^{-1}\gamma_{r_1}(c_B^+,c')\cdot c'(r_1)\bigr)\\
&\quad&=\Id-i\circ p(c_B^-(r_1)).
\end{eqnarray*}}
\end{proof}

\begin{Prop}\cite{Lee1,Lee2,Us}
If $[r_0,r_1]\cap\Lambda=\{r_D\in F_2(\DD)\}$, $\HM(\MM,r_0)\cong\HM(\MM,r_1)$.
\end{Prop}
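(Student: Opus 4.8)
The plan is to deduce this from the preceding proposition (the birth case) via the observation that a death is a time-reversed birth. Concretely, I would introduce the \emph{reflected} Cerf tuple $\CC^*:=(C,F^*)$ with $F^*:=(-F_1,1-F_2,F_3)$. The involution $(F_1,F_2,F_3)\mapsto(-F_1,1-F_2,F_3)$ of $\R^2\x[0,1]$ is a contactomorphism for the contact structure recalled in the appendix and it interchanges the pre-Lagrangians $\R^2\x\{0\}$ and $\R^2\x\{1\}$; hence $F^*$ is again a Cerf map (note $\pi_3\circ F^*=F_3$ is still proper, and knots/chords go to knots/chords), and the Cerf diagram of $\CC^*$ is the reflection of that of $\CC$ across $\{F_2=1/2\}$. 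In particular $\DD^*=\BB$ and $\BB^*=\DD$, while the handle-slide locus $\HH$ is unchanged as a subset of the fiber product; a death of $\CC$ at $r_D$ becomes a birth of $\CC^*$ at $1-r_D$.

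Next I would endow $\CC^*$ with the flow line counter and jump function $\gamma^*_r:=\gamma_{1-r}$, $\delta^*_r:=-\delta_{1-r}$, and check that $\MM^*:=(\CC^*,\gamma^*)$ is a Morse tuple. Axioms $(\gamma1^*)$ and $(\gamma2^*)$ are immediate since $F_3^*=F_3$ and $\gamma^*$ is just $\gamma$ with the parameter reversed. Reversing the parameter interchanges left and right approximations, i.e.\ $(\gamma^*)^\pm_{1-r}=\gamma^\mp_r$, and under this substitution the birth axiom $(\gamma4^*)$ at the birth $1-r_D$ of $\CC^*$ becomes literally the death axiom $(\gamma5)$ at the death $r_D$ of $\CC$; symmetrically $(\gamma5^*)\Leftrightarrow(\gamma4)$ and $(\gamma3^*)\Leftrightarrow(\gamma3)$, the last equivalence being where the sign flip $\delta^*=-\delta$ enters. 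Since for every $r\notin 1-\Lambda$ the pair $(\CM^{(a,b)}(\MM^*,r),\p^{(a,b)}_r)$ coincides with $(\CM^{(a,b)}(\MM,1-r),\p^{(a,b)}_{1-r})$ under the identification $c(r)\leftrightarrow c(1-r)$, and the filtration maps match as well, we get $\HM(\MM^*,r)=\HM(\MM,1-r)$. Applying the preceding proposition to $\MM^*$ on $[1-r_1,1-r_0]$ — whose intersection with $\Lambda^*=1-\Lambda$ is the single parameter $1-r_D$, a birth of $\MM^*$ — gives $\HM(\MM^*,1-r_1)\cong\HM(\MM^*,1-r_0)$, that is, $\HM(\MM,r_1)\cong\HM(\MM,r_0)$.

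The one point I expect to require care is the verification of $(\gamma3^*)$: one has to confirm that the correct reflection rule for the jump function is simply $\delta^*=-\delta$, also when several handle-slide pairs sit at one parameter value. This is a short computation with $(\gamma3)$ applied twice, since $(\gamma3)$ exhibits $\gamma^+$ as the image of $\gamma^-$ under an invertible elementary transformation and the reflected axiom merely requests its inverse. If one prefers to avoid the reflected contact structure and this sign bookkeeping altogether, an equivalent route is to copy the proof of the preceding proposition verbatim with the roles of $r_0$ and $r_1$ exchanged and $(\gamma4)$ replaced throughout by $(\gamma5)$: set $i(c(r_1))=c(r_0)-\gamma_{r_0}(c,c_D^-)\,\gamma^-_{r_D}(c_D^+,c_D^-)^{-1}\!\cdot c_D^+(r_0)$, define $p$ and the chain homotopy $D$ by the analogous formulas (with $c_D^\pm(r_0)$ in place of $c_B^\pm(r_1)$ and $\gamma^-_{r_D}$ in place of $\gamma^+_{r_B}$), and observe that the analogues of Claims 1 and 2 go through word for word, now invoking $(\gamma2)$ and $(\gamma5)$ where the birth proof invoked $(\gamma2)$ and $(\gamma4)$.
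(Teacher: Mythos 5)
Your proposal is correct and, in substance, is exactly the paper's argument: the paper disposes of the death case in one line by ``reversing arrows and signs'' in the proof of the birth proposition, which is precisely your second route (replace $(\gamma4)$ by $(\gamma5)$, swap $r_0\leftrightarrow r_1$, and rerun Claims 1 and 2 verbatim). Your first, reflected-Cerf-tuple formalization is a harmless repackaging of the same idea, and the only point you flag as delicate there (the rule $\delta^*=-\delta$ when several handle-slides share a parameter) does not even arise for this proposition, since $[r_0,r_1]\cap\Lambda=\{r_D\}$ contains no handle-slides.
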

\begin{proof}
It follows immediately from the proof of the previous proposition by reversing arrows and signs.
\end{proof}

\section{Statement of the main results}
In the previous section, we showed that Morse homology is unchanged even if a handle-slide or a birth-death takes place. Unfortunately, however, this invariance property may not hold as passing through infinitely many degenerate points in $\Lambda$. The following shocking example describes that infinitely many transfer of the spectral value can make a homology class escape to infinity. As mentioned in the introduction, the following phenomenon only happen in Morse homology on noncompact manifolds and Rabinowitz Floer homology because Morse homology on compact manifolds or the classical Floer theory has finite dimensional chain groups over a suitable ring.\\[-2ex]

\noindent\textbf{Theorem A. (Escape of a homology class.)} {\em A homology class $h\in\HM(\MM,0)$ may be able to escape and thus there is no homology class in $\HM(\MM,1)$ corresponding to $h$.}

\begin{proof}
The only possible accident breaking the invariance property is the escape of a homology class since we have assumed that $C$ is compact; otherwise a critical point also can escape to infinity, see Remark \ref{Rmk:escape of a critical point}. It is caused by infinitely many ``transfer of the spectral value'' which occur by handle-slides and birth-deaths. For simplicity, we argue only with handle-slides.

For each homology class $h\in\HM(\MM,r)$, the spectral value is defined by
\beq\label{eq:spectral value}
\rho(h,r):=\inf_{\substack{\alpha\in\CM(\MM,r)\\ [\alpha]=h}}\sigma(\alpha,r)
\eeq
where
$$
\sigma(\alpha,r):=\sup_i\big\{F_3(c_i(r))\,\big|\,\alpha=\sum_i f_i\cdot c_i(r),\,\, 0\ne f_i\in\F,\,c_i(r)\in C\big\}.
$$
Moreover we set $\rho(0,r)=-\infty$ for convention.

We recall that even though $[r_1,r_2]\cap\Lambda=\{r_H\in F_2(\HH)\}$ we have the chain map \eqref{eq:chain map at HS}
\bean
A:\CM^{(a(r_2),b(r_2))}(\MM,r_2)&\pf \CM^{(a(r_1),b(r_1))}(\MM,r_1)\\
  c_1(r_2)&\longmapsto c_1(r_1)+\!\!\!\!\!\sum_{\substack{c_H^-\subset C;\\(c,c_H^-)(r_H)\in\HH}}\!\!\!\!\!\delta_{r_H}(c_1,c_H^-)\cdot c_H^-(r_1)
\eea
which gives an isomorphism $A_*$ between $\HM(\MM,r_2)$ and $\HM(\MM,r_1)$.

For clarity, we assume that $[c_1]$ is a homology class in $\HM(\MM,r_1)$ such that $(c_1,c_2)(r_H)\in\HH$ with $\delta_{r_H}(c_1,c_2)=1\in\mathbb{F}$. Thus after passing through a degenerate point $r_H$, the homology class $[c_1]$ changes to $A_*[c_1]=[c_1-c_2]\in\HM(\MM,r_2)$ so called ``bifurcation of a homology class'', see Figure \ref{fig:Handle-slide}. Moreover we note that the spectral value changes along $r$ as below:
$$
\rho([c_1],r_1)=F_3(c_1(r_1))\pf\rho(A_*[c_1]),r_2)=\max\{F_3(c_1(r_2)),F_3(c_2(r_2))\}.
$$
We refer to this change ``transfer of the spectral value''.

\begin{figure}[htb]
\input{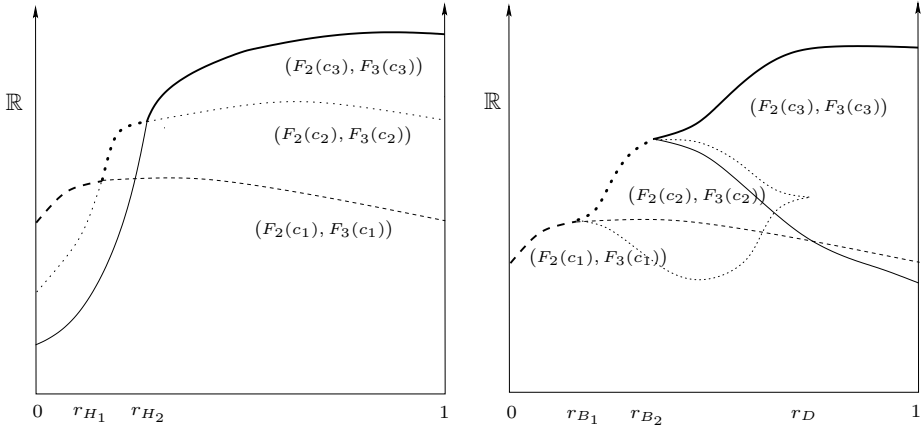}\caption{transfer of the spectral value}
\label{fig:transfer of a spectral value}
\end{figure}

For example, in Figure \ref{fig:transfer of a spectral value} there are two degenerate points $r_{H_1},r_{H_2}\in F_2(\HH)$ and homology class $[c_1]$ changes to $[c_1-c_2]$ and to $[c_1-c_2-c_3]$. Here dashed line, dotted line, and solid line are the front projections of $F(c_1),F(c_2)$, and $F(c_3)$ respectively; moreover bold line indicates critical points which give the spectral value of the homology class $[c_1]$ at each time. In the extremal case that there are infinitely many degeneration points $F_2(\HH)$ and the spectral value $\rho([c_1],\cdot)$ transfers infinitely many times so that it finally diverges to infinity, the homology class $[c_1]$ escapes to infinity and it gives an example described in Theorem A. It is conceivable that infinitely many birth-deaths are also able to cause the escape of a homology class by the analogous argument, see Figure \ref{fig:birth}.
\end{proof}

\begin{Rmk}
We note that the spectral value never transfers at a point in $F_2(\HH)$. To $\delta_{r_H}(c_1,c_H^-)$ be nonzero, $F_3(c_H^-)$ has to be less than $F_3(c_1)$; thus the transfer of the spectral value takes place after a handle-slide. By the same reason, this is true for birth-deaths.
\end{Rmk}

\begin{figure}[htb]
\input{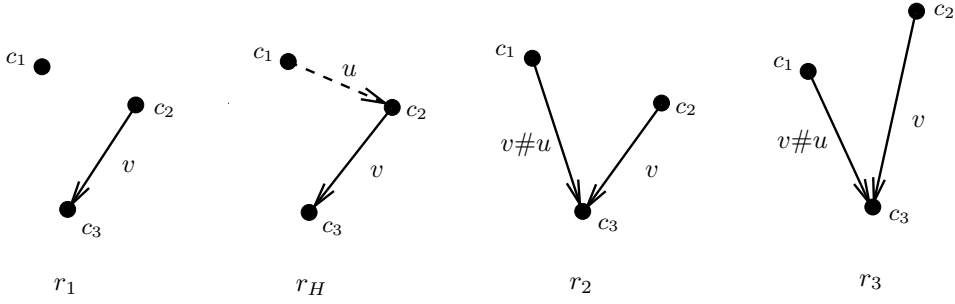}\caption{Handle-slide}
\label{fig:Handle-slide}
\end{figure}
Now we describe the phenomenon illustrated in Figure \ref{fig:Handle-slide}. At time $r_1$, there is only one gradient flow line $v$ between $c_2$ and $c_3$;
it means that $\gamma_{r_1}(c_1,c_2)=\gamma_{r_1}(c_1,c_3)=0$ and $\gamma_{r_1}(c_2,c_3)=1$. At this moment $[c_1]=h\in\HM(\MM,r_1)$ is a nonzero homology class
and the spectral value of $h$ is $\rho(h,r_1)=F_3(c_1(r_1))$. A handle-slide takes place at $r_H$; a degenerate gradient flow line $u$ interchanging $c_1$ and $c_2$ emerges, it means that $(c_1,c_2)(r_H)\in\HH$ and $\delta_{r_H}(c_1,c_2)=1$. After a handle-slide, by gluing two gradient flow lines $v$ and $u$, a new gradient flow line $v\#u$ between $c_1$ and $c_3$ appears at $r_2$, see Axiom $(\gamma3)$; the homology class $[c_1]$ represented by $c_1-c_2$ at $r_2$ but the spectral value is still $F_3(c_1(r_2))$. However after some time, the action value of $c_2$ goes over the action value of $c_1$, thus the spectral value of $h$ at $r_3$ is changed to $\rho(h,r_3)=F_3(c_2(r_3))$.

\begin{figure}[htb]
\input{birth.pstex_t}\caption{Birth}
\label{fig:birth}
\end{figure}

Let above Figure \ref{fig:birth} be graphs of one parameter family of functions $\{f_r\}_{r\in[0,1]}$ on a one dimensional manifold, embedded in a plane; note that these functions are always Morse except at time $r_B\in[0,1]$. At $r_1$, $h=[c_1]$ is a nonzero homology class in $\HM(\MM,r_1)$ and $\rho(h,r_1)=f_{r_1}(c_1)$. At the moment of birth, $r_B\in[0,1]$, a new critical point $c_0$ born which is a degenerate point thus Morse homology cannot be defined at $r_B$. After that, it bifurcates into two critical points $c_B^+$ and $c_B^-$. Now the homology class $h$ is represented by $c_1-c_B^+$; and the spectral value may transfer after some time, in this example $\rho(h,r_2)=f_{r_2}(c_B^+)$. Figure \ref{fig:transfer of a spectral value} illustrates how the spectral value varies along certain homotopies.

\begin{Rmk}\label{Rmk:escape of a critical point}\textbf{(Escape of a critical point.)} In the general Morse theory on noncompact manifolds, critical points may escape to infinity during homotopies and this also violates the invariance property of Morse homology. For instance, let our manifold be $\R^2-l$ where $l:=\{(x,0)\,|\,\frac{1}{2}\leq x\leq1\}$ and an one-parameter family of Morse functions be $f_r(x,y):=(x-r)^2+y^2$. Then $\Crit f_r=\{(r,0)\,|\,r\in[0,\frac{1}{2})\}$, accordingly this critical point escape ever after the time $1/2$, see Figure \ref{fig:escape} below. However in this paper we have assumed that each connected component of $C$ is compact, it means that each critical point of the Morse function stays in a compact region of a manifold during homotopies.
\begin{figure}[htb]
\input{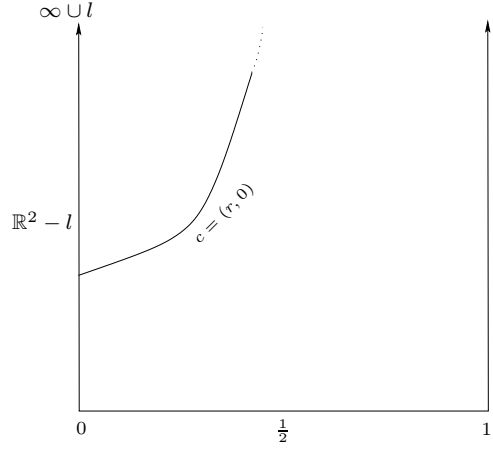}\caption{escape of a critical point}
\label{fig:escape}
\end{figure}
\end{Rmk}

To exclude the escape of homology classes, we need the following hypothesis:
\begin{itemize}
\item[(H1)] There exists a continuous function $\Phi(s):\R\setminus(a,b)\pf\R_{>0}$ such that for all $c\subset C$,
\beqn
\bigg|\frac{\p F_3(c(r))}{\p r}\bigg|\leq\Phi\big(F_3(c(r))\big)\quad \&\quad\int_b^\infty\frac{1}{\Phi(s)}ds=\infty\quad \&\quad\int_{-\infty}^{a}\frac{1}{\Phi(s)}ds=\infty.
\eeq
\end{itemize}
\begin{Rmk}
In the hypothesis (H1), $(a,b)$ can be empty. For example, $\Phi(s)=1/|s|$ satisfies the hypothesis.
\end{Rmk}
\noindent\textbf{Theorem B.} {\em Under the hypothesis \rm{(H1)},} {\em Morse homology is invariant.}

\begin{proof}
Assume on the contrary that for a give homology class $h\in\HM(\MM,0)$, there exist $\{c_\nu\subset C\}_{\nu\in\N}$ components of $C$ and $\{r_\nu\in[0,1]\}_{\nu\in\N}$ such that
\begin{itemize}
\item $r_1=0$, $r_{\nu+1}\geq r_\nu$,
\item $\lim_{\nu\to\infty}\rho(h,r_\nu)=\infty$.
\item the $F_3$-value of $c_\nu$ transfers to $c_{\nu+1}$ at $r_\nu$ and it diverges to infinity; that is, $F_3(c_\nu(r_\nu))=F_3(c_{\nu+1}(r_\nu))$ and $\lim_{\nu\to\infty} F_3(c_\nu(r_\nu))=\infty$.
\item there exists $k\in\N$ such that $F(c_\nu(r_\nu))>b$ for all $\nu\geq k$.
\end{itemize}
Assuming the spectral value goes to the positive infinity, we compute
\bean
\lim_{\substack{n\to\infty\\ n\in\N}}r_n-r_k&=\lim_{n\to\infty}\int_{r_k}^{r_n}1dr\\
&=\lim_{n\to\infty}\sum_{\nu=k}^{n-1}\int_{r_{\nu}}^{r_{\nu+1}}1dr\\
&\geq\lim_{n\to\infty}\sum_{\nu=k}^{n-1}\int_{r_{\nu}}^{r_{\nu+1}}\frac{1}{\Phi(F_3)} \frac{\p F_3(c_{\nu+1}(r))}{\p r}dr\\
&\geq\lim_{n\to\infty}\sum_{\nu=k}^{n-1}\int_{F_3(c_{\nu+1}(r_{\nu}))}^{F_3(c_{\nu+1}(r_{\nu+1}))}\frac{1}{\Phi(s)}ds\\
&=\lim_{n\to\infty}\int_{F_3(c_{k+1}(r_k))}^{F_3(c_n(r_n))}\frac{1}{\Phi(s)}ds=\infty.
\eea
It contradicts to the fact $r_k,r_n\in[0,1]$. On the other hand, if $\lim_{\nu\to\infty}\rho(h,r_\nu)=-\infty$, we may assume that
\begin{itemize}
\item $\lim_{\nu\to\infty} F_3(c_\nu(r_\nu))=-\infty$.
\item there exists $k\in\N$ such that $F(c_\nu(r_\nu))<a$ for all $\nu\geq k$.
\end{itemize}
Then the following similar computation holds.
\bean
\lim_{\substack{n\to\infty\\ n\in\N}}r_n-r_1
&=\lim_{n\to\infty}\sum_{\nu=k}^{n-1}\int_{r_{\nu}}^{r_{\nu+1}}1dr\\
&\geq\lim_{n\to\infty}\sum_{\nu=k}^{n-1}\int_{F_3(c_{\nu+1}(r_{\nu}))}^{F_3(c_{\nu+1}(r_{\nu+1}))}-\frac{1}{\Phi(s)}ds\\
&=\lim_{n\to\infty}\int_{F_3(c_{k+1}(r_k))}^{F_3(c_n(r_n))}-\frac{1}{\Phi(s)}ds=\infty.
\eea
The above two contradictory computation complete the proof of the theorem.
\end{proof}

\begin{Rmk}
In a compact manifold $M$, $\Crit f_r$ for $f_r\in C^\infty(M)$, $r\in[0,1]$ consists of finite components thus (H1) holds with $\Phi\equiv O$ for some constant $O\in\R$ and thus, neither critical points nor homology classes never escape. Therefore, Morse homology on compact manifold is independent of the choice of Morse functions since we can always homotop two Morse functions. We can also find a constant function $\Phi$ in the classical Floer theory. Let $(M,\om)$ be a weakly monotone closed symplectic manifold and $H\in C^\infty(S^1\x M)$ be a time-dependent Hamiltonian function. For each contractible loop $v\in C^\infty(S^1,M)$, we choose a map $w\in C^\infty(D^2,M)$ with $w|_{\p D^2}=v$. With additional equivalences and conditions, the Floer action functional is defined as below (refer to \cite{HS,Sa} for a rigorous framework):
$$
\AA_H(v,w)=-\int_{D^2}w^*\om-\int_0^1H_t(v(t))dt.
$$
Along the homotopy $\{H_r\}_{r\in[0,1]}$ and corresponding critical points $\{(v_r,w_r)\}_{r\in[0,1]}$ we calculate
\bean
\bigg|\frac{\p}{\p r}\AA_{H_r}(v_r,w_r)\bigg|&=\Big|d\AA_{H_r}(v_r,w_r)[\p_r v_r,\p_rw_r]+\p_r\AA_{H_r}(v_r,w_r)\Big|\\
&=\bigg|\int_0^1\p_rH_r(v_r(t))dt\bigg|\\
&\leq||\p_rH_r||_{L^\infty}.
\eea
Thus the value $|\frac{\p}{\p r}\AA_{H_r}(v_r,w_r)|$ is uniformly bounded by some constant for all critical points $(v_r,w_r)$. Accordingly, the invariance property of Floer homology can be proved by the bifurcation method. However for the Rabinowitz action functional, this argument does not hold anymore by the effect of the Lagrange multiplier. Nevertheless if we assume tameness then we get $\Phi$, not necessarily constant, satisfying (H1), see section 4.
\end{Rmk}
\begin{Rmk}
Note that (H1) is a sufficient condition to prevent the escape of a homology class, but not a necessary condition. As an easy example, if we know that there is no intersection points in the Cerf diagram then the transfer of the spectral value never occurs at all; thus every homology class remains along homotopies without any hypothesis. Besides, if we also have an information about the grading of critical points, it is also useful.
\end{Rmk}

On the other hand, if we know the data of the spectral value of a given homology calss at the initial point, we can show that the homology class survives under a mild hypothesis rather than (H1).\\[-1ex]

\begin{itemize}
\item[(H2)] For a given homology class $h\in\HM(\MM,0)$, there exists a continuous function $\Phi_h(s):\R\setminus(a,b)\pf\R_{>0}$ and $\kappa>0$ such that
\beqn
\bigg|\frac{\p F_3(c(r))}{\p r}\bigg|\leq\Phi_h\big(F(c(r))\big)\quad \&\quad\int_{M}^\infty\frac{1}{\Phi_h(s)}ds\geq 1+\kappa\quad \&\quad\int^{m}_{-\infty}\frac{1}{\Phi_h(s)}ds\geq 1+\kappa
\eeq
where $M:=\max\{b,\rho(h,0)\}$, $m:=\min\{a,\rho(h,0)\}$, and $\rho(h,0)$ is the spectral value of $h$ at $0$ defined in \eqref{eq:spectral value}.
\end{itemize}
\begin{Rmk}
In the hypothesis (H2), $(a,b)$ can be empty. For example, $h$ with $|\rho(h,0)|<1$ and $\Phi_h(s)=1/s^2$ satisfy the hypothesis.
\end{Rmk}
\textbf{Theorem C.}  {\em The homology class $h\in\HM(\MM,0)$ satisfying the hypothesis} \rm{(H2)} {\em survives along homotopies.}
\begin{proof}
Similar as the proof of Theorem B, we assume by contradiction that there exists sequences $c_\nu\subset C$ and $r_\nu\in[0,1]$, $\nu\in\N$ with the following properties.
\begin{itemize}
\item $r_1=0$, $r_{\nu+1}>r_\nu$.
\item $\lim_{\nu\to\infty}\rho(h,r_\nu)=\infty$; without loss of generality, we may assume that $\rho(h,r)$ is increasing as $r$ becomes bigger.
\item The action value of $c_\nu$ transfers to $c_{\nu+1}$ at $r_\nu$ and it diverges to infinity; that is, $F_3(c_\nu(r_\nu))=F_3(c_{\nu+1}(r_\nu))$ and $\lim_{\nu\to\infty} F_3(c_\nu(r_\nu))=\infty$.
\item Moreover, if $\rho(h,0)\geq b$, it holds that
$$
\int_{F_3(c_1(r_1))}^\infty\frac{1}{\Phi_h(s)}ds\geq 1+\frac{\kappa}{2}.
$$
If $\rho(h,0)<b$ there exists $r'\in[0,1]$ such that $r_k\leq r'\leq r_{k+1}$ for some $k\in\N$, $F_3(c_k(r'))=b$, and $F_3(c_\nu(r))>b$ for $r'>r$ and $\nu>k$.
\end{itemize}
Then we compute along the same lines as the proof of Theorem B. If $\rho(h,0)\geq b$,
\bean
\lim_{\substack{n\to\infty\\ n\in\N}}r_n-r_1&=\lim_{n\to\infty}\int_{r_1}^{r_n}1dr\\
&=\lim_{n\to\infty}\sum_{\nu=1}^{n-1}\int_{r_{\nu}}^{r_{\nu+1}}1dr\\
&\geq\lim_{n\to\infty}\sum_{\nu=1}^{n-1}\int_{r_{\nu}}^{r_{\nu+1}}\frac{1}{\Phi_h(F_3)} \frac{\p F_3(c_{\nu+1}(r))}{\p r}dr\\
&\geq\lim_{n\to\infty}\sum_{\nu=1}^{n-1}\int_{F_3(c_{\nu+1}(r_{\nu}))}^{F_3(c_{\nu+1}(r_{\nu+1}))}\frac{1}{\Phi_h(s)}ds\\
&=\lim_{n\to\infty}\int_{F_3(c_2(r_1))}^{F_3(c_n(r_n))}\frac{1}{\Phi_h(s)}ds\geq 1+\frac{\kappa}{2}>1.
\eea
It contradicts to $r_n,r_1\in[0,1]$. If $\rho(h,0)<b$, similarly we compute
\bean
\lim_{\substack{n\to\infty\\ n\in\N}}r_n-r'&=\lim_{n\to\infty}\int_{r'}^{r_n}1dr\\
&=\lim_{n\to\infty}\sum_{\nu=k+1}^{n-1}\int_{r_{\nu}}^{r_{\nu+1}}1dr+\int_{r'}^{r_{k+1}}1dt\\
&=\lim_{n\to\infty}\int_{F_3(c_{k}(r'))}^{F_3(c_n(r_n))}\frac{1}{\Phi_h(s)}ds\geq 1+\kappa>1.
\eea
The computation for the case $\lim_{\nu\to\infty}\rho(h,r_\nu)=-\infty$ analogously follows. These contradictory computations conclude the proof.
\end{proof}

\section{Application to Rabinowitz Floer homology}
In this section we discuss the invariance problem of Rabinowitz Floer homology. Very roughly, Rabinowitz Floer homology is a semi-infinite dimensional Morse homology of the Rabinowitz action functional. The invariance problem of Rabinowitz Floer homology is highly nontrivial; it turns out that Rabinowitz Floer homology is invariant under a suitable condition, but a counterexample is not yet known in more general case. First, we recall the notion of Rabinowitz Floer homology and formulate the invariance problems. We refer to \cite{AF} for a brief survey on Rabinowitz Floer homology theory. In fact, the Rabinowitz action functional and Rabinowitz Floer homology have nice properties on restricted contact submanifolds, but they still can be defined on stable manifolds and have significant roles in the magnetic field theory, see \cite{CFP}. In this paper, we focus on stable hypersurfaces, yet our story continues to hold on any stable coisotropic submanifolds; we refer to \cite{Ka1} for Rabinowitz Floer theory on coisotropic submanifolds.

\subsection{Stability and tameness}
In this subsection we briefly recall the notions of stability and tameness; for further discussions, we refer to \cite{CFP,CV} and cited therein.
\begin{Def}\label{def:stable}
A {\em Hamiltonian structure} on a $(2n-1)$-dimensional manifold $\Sigma$  is a closed 2-form $\om\in\Omega^2(\Sigma)$ such that  $\om^{n-1}\ne0$. This Hamiltonian structure is called {\em stable} if there exists a {\em stabilizing 1-form} $\lambda\in\Omega^1(\Sigma)$ such that
\begin{itemize}
\item $\ker\om\subset\ker d\lambda$;
\item $\lambda|_{\ker\om}\ne0$.
\end{itemize}
Furthermore two equations $\lambda(R)=1$ and $i_R\om\ne0$ characterize the unique vector field $R$ on $\Sigma$, so called the {\em Reeb vector field}.
\end{Def}
There are several equivalent formulations of stability.
\begin{Thm}\cite{Wa}
A Hamiltonian structure $(\Sigma,\om)$ is stable if and only if its characteristic foliation is geodesible.
\end{Thm}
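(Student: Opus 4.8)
The plan is to read this as the standard characterization of geodesibility for the flow direction of a Hamiltonian structure, so that the proof reduces to matching the two conditions in Definition~\ref{def:stable} with the two hypotheses in the classical criterion for a one-dimensional foliation to be geodesible, which is the content of \cite{Wa}. First I would record that the characteristic foliation really is a foliation: since $\om^{n-1}\ne 0$ on the $(2n-1)$-manifold $\Sigma$, the closed two-form $\om$ has constant rank $2n-2$, hence $\ker\om$ is a line field, which is automatically involutive and integrates to the characteristic foliation $\mathcal F$. When a stabilizing form $\lambda$ is present the line field $\ker\om$ is orientable --- $\lambda|_{\ker\om}$ is a nowhere-zero section of the dual line bundle $(\ker\om)^*$ and hence trivializes it --- so one may fix a nowhere-vanishing generator $X$ of $\ker\om$; in the oriented hypersurface setting of interest $\ker\om$ is orientable anyway, and in general one restricts to that case, so I assume it throughout.

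Next I would invoke the criterion from \cite{Wa}: $\mathcal F$ is geodesible if and only if there is $\lambda\in\Om^1(\Sigma)$ with $\lambda(X)\ne 0$ everywhere and $i_X\,d\lambda=0$. For the nontrivial implication, rescale $X$ (equivalently take $X=R$, the Reeb vector field of Definition~\ref{def:stable}, for which $\lambda(R)=1$) so that $\lambda(X)\equiv 1$; then $\mathcal L_X\lambda=i_X d\lambda+d(i_X\lambda)=0+d(1)=0$, so the hyperplane field $\ker\lambda$ is invariant under the flow of $X$. Declaring the splitting $T\Sigma=\R X\oplus\ker\lambda$ orthogonal, with $X$ of unit length and an arbitrary bundle metric $h$ on $\ker\lambda$, defines a metric $g$ for which $\lambda=g(X,\cdot)$; then $g(\nabla_X X,X)=\tfrac12 X\big(g(X,X)\big)=0$, and for $Y\in\ker\lambda$ one has $g(\nabla_X X,Y)=-g(X,\nabla_X Y)=-\lambda([X,Y])=d\lambda(X,Y)=0$, whence $\nabla_X X=0$ and the leaves of $\mathcal F$ are geodesics. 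The converse is the same computation run backwards: from a metric with unit geodesic generator $X$ one sets $\lambda:=g(X,\cdot)$, so that $\lambda(X)=1$ and $d\lambda(X,Y)=X\,g(X,Y)-g(X,[X,Y])=g(X,\nabla_X Y)-g(X,\nabla_X Y)=0$, i.e. $i_X\,d\lambda=0$.

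Finally --- and this is the only place where ``stability'' as defined in the paper actually enters --- I would observe that the two conditions in the criterion are verbatim the two bullets of Definition~\ref{def:stable}: ``$\lambda|_{\ker\om}\ne 0$'' is exactly ``$\lambda(X)\ne 0$'', and ``$\ker\om\subset\ker d\lambda$'' is exactly ``$i_X\,d\lambda=0$'' for $X\in\ker\om$. Hence $(\Sigma,\om)$ is stable, i.e. admits a stabilizing one-form, if and only if $\mathcal F$ admits a one-form as in \cite{Wa}, if and only if $\mathcal F$ is geodesible.

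The main obstacle is not this bookkeeping but the geometric input from \cite{Wa}, namely that invariance of a complementary hyperplane field under the characteristic flow is equivalent to geodesibility of the leaves; the $(\Leftarrow)$ construction of the metric above is the essential content. The one genuinely delicate point --- which I would flag rather than resolve --- is orientability of the characteristic line field: it is automatic once a stabilizing form exists and harmless in the oriented hypersurface setting, but in complete generality the equivalence must be stated with the line field assumed orientable, or with a passage to the orientation double cover.
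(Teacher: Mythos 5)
Your proposal is correct, but it cannot be compared to a proof in the paper because the paper gives none: this theorem is quoted from Wadsley's work \cite{Wa} purely as a black box, and the text never unpacks it. What you have done is supply the missing argument via the classical Gluck--Wadsley criterion: a one-dimensional foliation spanned by a nowhere-vanishing $X$ is geodesible iff there is a $1$-form $\lambda$ with $\lambda(X)\neq 0$ and $i_X d\lambda=0$, and these two conditions are verbatim the two bullets of the paper's Definition of stability applied to $X\in\ker\om$. Your proof of the criterion itself is sound: with $\lambda(X)\equiv 1$ the identity $\mathcal{L}_X\lambda=0$ makes $\ker\lambda$ flow-invariant, the metric $g$ declaring $X$ unit and orthogonal to $\ker\lambda$ satisfies $\lambda=g(X,\cdot)$, and the Koszul-type computations $g(\nabla_XX,X)=0$ and $g(\nabla_XX,Y)=-\lambda([X,Y])=d\lambda(X,Y)=0$ (using $g(X,\nabla_YX)=\tfrac12 Y|X|^2=0$) give $\nabla_XX=0$; the converse computation with $\lambda:=g(X,\cdot)$ is equally correct. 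This is a strictly more informative route than the paper's, at the cost of reproving a cited classical fact. The orientability issue you flag can in fact be discharged rather than merely noted: stability forces orientability of $\ker\om$ (as you say, $\lambda$ trivializes its dual), and in the other direction the hypersurfaces relevant to the paper are orientable, in which case $\ker\om$ is canonically trivialized by the vector field $X$ defined by $i_X\,\mathrm{vol}=\om^{n-1}$, so the unit tangent field needed in the geodesible-to-stable direction always exists; only for a non-orientable characteristic line field on a non-orientable $\Sigma$ would one need the double-cover formulation, a case outside the paper's setting.
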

\begin{Thm}\cite{Su}
A Hamiltonian structure $(\Sigma,\om)$ is non-stable if and only if there exists a foliation cycle which can be arbitrary well approximated by boundaries of singular 2-chains tangent to the foliation.
\end{Thm}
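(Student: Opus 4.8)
\emph{Proof strategy (Sullivan's duality for foliation cycles).} The plan is to deduce this from Sullivan's duality theory for structure currents, applied to the one‑dimensional characteristic foliation $\mathcal F:=\ker\om$ of $(\Sigma,\om)$; throughout I assume $\Sigma$ closed, as in the setting of \cite{CFP}.

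First I would record a dual reformulation of stability. Since $\om^{n-1}\ne0$, the line field $\ker\om$ is a genuine $1$‑dimensional foliation; choose a nowhere–zero vector field $X$ generating it (passing to the orientation double cover if necessary). Because $\ker\om=\mathrm{span}(X)$, the two conditions defining stability, $\ker\om\subset\ker d\lambda$ and $\lambda|_{\ker\om}\ne0$, are exactly $\iota_Xd\lambda=0$ together with $\lambda(X)\ne0$ everywhere; on the connected closed manifold $\Sigma$ the latter can be taken to be $\lambda(X)>0$, with a uniform positive lower bound once an auxiliary metric is fixed. So: $(\Sigma,\om)$ is stable $\iff$ there is a $1$‑form $\lambda$ with $\iota_Xd\lambda=0$ and $\lambda(X)>0$.

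Next I would set up two cones of currents. Work in the space $\mathcal D_1(\Sigma)$ of de Rham $1$‑currents with its weak-$*$ topology, dual to $\Omega^1(\Sigma)$. Let $\mathcal P\subset\mathcal D_1(\Sigma)$ be the structure cone, the weak-$*$ closed convex cone generated by the Dirac currents $\delta_x\otimes X_x$ along $\mathcal F$; its elements of unit mass form a weak-$*$ compact base $\mathcal P_1$ (Banach--Alaoglu, using compactness of $\Sigma$), with $0\notin\mathcal P_1$. Let $\mathcal B\subset\mathcal D_1(\Sigma)$ be the weak-$*$ closed linear span of the currents $\partial\sigma$, where $\sigma$ ranges over the smooth singular $2$‑chains every tangent $2$‑plane of which contains $\ker\om$, i.e.\ the $2$‑chains ``tangent to the foliation''. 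The elementary dictionary to be checked, via Stokes and the pointwise identity $d\lambda(X,v)=(\iota_Xd\lambda)(v)$ (with a shrinking–surface argument for the converse), is: $\iota_Xd\lambda=0\iff\langle\lambda,T\rangle=0$ for all $T\in\mathcal B$, while $\lambda(X)>0\iff\langle\lambda,\cdot\rangle>0$ on $\mathcal P_1$. Moreover $\mathcal B$ consists of cycles ($\partial\partial=0$), so $\mathcal P\cap\mathcal B$ lies inside the cone $\mathcal Z$ of foliation cycles (closed structure currents), and in fact $\mathcal P\cap\mathcal B=\mathcal Z\cap\mathcal B$. Combining with the previous paragraph, $(\Sigma,\om)$ is stable iff there is a smooth $1$‑form annihilating $\mathcal B$ and strictly positive on $\mathcal P_1$; by Hahn--Banach separation of the compact convex set $\mathcal P_1$ from the weak-$*$ closed subspace $\mathcal B$, together with the de Rham representation of weak-$*$ continuous functionals by smooth forms, such a form exists iff $\mathcal P_1\cap\mathcal B=\emptyset$, i.e.\ iff $\mathcal Z\cap\mathcal B=\{0\}$. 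Contrapositively, $(\Sigma,\om)$ is non-stable iff some nonzero foliation cycle lies in $\mathcal B$, that is, iff it is a weak-$*$ limit of boundaries of $2$‑chains tangent to the foliation --- which is exactly the statement, once ``weak-$*$ limit'' is phrased as ``arbitrarily well approximated''.

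The main obstacle is the functional‑analytic heart of Step 3: one must verify that $\mathcal P$ is weak-$*$ closed with a compact base bounded away from $0$ (this is where compactness of $\Sigma$ is indispensable and is precisely Sullivan's argument), that the separating functional produced by Hahn--Banach can be represented by a \emph{smooth} $1$‑form rather than a mere current (de Rham regularization/smoothing), and that it can be taken \emph{strictly} positive on $\mathcal F$ rather than only nonnegative. A subsidiary technical point is to pin down the class of ``$2$‑chains tangent to the foliation'' precisely enough that $\mathcal B$ is exactly the pre‑annihilator of $\{\lambda:\iota_Xd\lambda=0\}$, which again rests on approximating currents by smooth forms.
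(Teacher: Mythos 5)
The paper contains no proof of this statement---it is quoted as background from Sullivan \cite{Su}---so the only benchmark is Sullivan's original argument, and your sketch reproduces exactly that: the reformulation of stability as Gluck's criterion ($\lambda(X)>0$, $\iota_Xd\lambda=0$ for a vector field $X$ spanning $\ker\om$) followed by Sullivan's duality between structure currents/foliation cycles and smooth $1$-forms, with Hahn--Banach separation of the compact base of the structure cone from the weak-$*$ closed subspace of approximable boundaries of tangent $2$-chains. The technical points you flag are resolved just as you indicate (weak-$*$ continuous functionals on currents are already smooth forms, and the separation gap over the compact base yields strict positivity), so your outline is correct and coincides with the proof behind the citation.
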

\begin{Def}
A closed hypersurface $\Sigma$ in a symplectic manifold $(M,\om)$ is called {\em stable} if $\Sigma$ separates $M$ and $\om_{|\Sigma}$ defines a stable Hamiltonian structure. A {\em stable homotopy} is a smooth homotopy $\{(\Sigma_r,\lambda_r)\}_{r\in[0,1]}$ of stable hypersurfaces together with associated stabilizing one forms.
\end{Def}
\begin{Prop}
A closed hypersurface $\Sigma$ in $(M,\om)$ is stable if and only if there exists a tubular neighborhood $\Sigma\x(-\epsilon,\epsilon)\subset M$ of $\Sigma\x\{0\}$ such that $\ker\om|_{\Sigma\x\{r\}}=\ker\om|_{\Sigma\x\{0\}}$.
\end{Prop}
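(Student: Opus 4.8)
The plan is to prove the two implications separately. Note first that the requirement ``$\Sigma$ separates $M$'' in the definition of stability is a purely global topological condition that is untouched by the local construction below, so what we really establish is the equivalence of ``$\om|_\Sigma$ defines a stable Hamiltonian structure'' with the neighborhood condition, the separation hypothesis being carried along on both sides. Throughout set $2n=\dim M$ and $\om_\Sigma:=\om|_\Sigma$; since $\Sigma$ is odd dimensional and $\om^n\ne0$ will be seen to force $\om_\Sigma^{n-1}\ne0$, the form $\om_\Sigma$ has rank exactly $2n-2$ and $\ker\om_\Sigma$ is a line field on $\Sigma$. In either direction $\Sigma$ is two-sided, hence orientable, so $\ker\om_\Sigma$ is trivial; the data on each side (a stabilizing form on one side, the constant kernel on the other) in fact hand us a nowhere-vanishing section $R$ of it directly.

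\emph{The neighborhood condition implies stability.} Suppose $\Sigma\x(-\epsilon,\epsilon)\subset M$ is a tubular neighborhood with $\ker\om|_{\Sigma\x\{r\}}=\ker\om_\Sigma=:L$ for all $r$; fix an $r$-independent nowhere-zero section $R$ of $L$. Write $\om=\alpha_r+dr\wedge\beta_r$ with $\alpha_r=\om|_{\Sigma\x\{r\}}\in\Om^2(\Sigma)$ and $\beta_r\in\Om^1(\Sigma)$, and split $d=d_\Sigma+dr\wedge\p_r$. Then $d\om=0$ gives $d_\Sigma\alpha_r=0$ and, on the $dr$-component, $\p_r\alpha_r=d_\Sigma\beta_r$. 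Since $R$ spans $L=\ker\alpha_r$ for every $r$ and $R$ does not depend on $r$, contracting the last identity with $R$ yields $\iota_R\,d_\Sigma\beta_r=\p_r(\iota_R\alpha_r)=0$. Moreover, as $\alpha_r$ has no $dr$-component, $\om^n$ equals $\pm\,n\,dr\wedge\beta_r\wedge\alpha_r^{n-1}$, so $\om^n\ne0$ forces $\beta_r\wedge\alpha_r^{n-1}$ to be a volume form on $\Sigma$ (in particular $\alpha_r^{n-1}\ne0$, so $(\Sigma,\alpha_r)$ is a Hamiltonian structure); contracting this with $R$ and using $\iota_R\alpha_r=0$ gives $\beta_r(R)\,\alpha_r^{n-1}\ne0$, hence $\beta_r(R)$ is nowhere zero. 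Taking $r=0$ we conclude that $\lambda:=\beta_0$ satisfies $\ker\om_\Sigma\subset\ker d\lambda$ and $\lambda|_{\ker\om_\Sigma}\ne0$, i.e.\ $\lambda$ stabilizes $\om_\Sigma$.

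\emph{Stability implies the neighborhood condition.} Let $\lambda$ be a stabilizing $1$-form and $R$ the Reeb vector field, so $\iota_R\om_\Sigma=0$, $\lambda(R)=1$, and $\iota_R d\lambda=0$. On the abstract product $\Sigma\x(-\epsilon,\epsilon)$ with coordinate $t$ set $\Om:=\om_\Sigma+d(t\lambda)=\om_\Sigma+dt\wedge\lambda+t\,d\lambda$, all $\Sigma$-forms being pulled back. Then $\Om$ is closed, and near $t=0$ it is non-degenerate since $\Om^n|_{t=0}=\pm\,n\,dt\wedge\lambda\wedge\om_\Sigma^{n-1}$ while $\iota_R(\lambda\wedge\om_\Sigma^{n-1})=\om_\Sigma^{n-1}\ne0$; after shrinking $\epsilon$, $\Om$ is symplectic. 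On each slice, $\Om|_{\Sigma\x\{t\}}=\om_\Sigma+t\,d\lambda$ is annihilated by $R$ precisely because $\iota_R d\lambda=0$, and for small $|t|$ it still has rank $2n-2$ (as $\om_\Sigma^{n-1}\ne0$), so its kernel is exactly the $t$-independent line field $\langle R\rangle$. Finally, $\Sigma\subset(M,\om)$ and $\Sigma\x\{0\}\subset(\Sigma\x(-\epsilon,\epsilon),\Om)$ are hypersurfaces inducing the same $2$-form $\om_\Sigma$, so by the coisotropic (hypersurface) neighborhood theorem there is a symplectomorphism from a neighborhood of $\Sigma$ in $(M,\om)$ onto a neighborhood of $\Sigma\x\{0\}$ in $(\Sigma\x(-\epsilon,\epsilon),\Om)$ restricting to the identity on $\Sigma$; transporting the product decomposition back through this map produces the desired tubular neighborhood of $\Sigma$ in $M$.

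The computations are elementary; the content is conceptual. The point is that the stability condition $\iota_R d\lambda=0$ is exactly what pins down the characteristic line fields of the nearby level sets $\om_\Sigma+t\,d\lambda$, so that the model $\Om$ does what is required, and conversely that constancy of these kernels is precisely what forces $\iota_R\,d_\Sigma\beta_r=0$. The only ingredient beyond direct computation is the coisotropic neighborhood theorem for hypersurfaces; the one small subtlety on the ``if'' side is extracting $\beta_0(R)\ne0$ from non-degeneracy of $\om$, which is what guarantees that $\lambda=\beta_0$ is genuinely non-vanishing on $\ker\om_\Sigma$ rather than merely annihilating it.
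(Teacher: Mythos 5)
Your proof is correct and takes essentially the same route as the paper: one direction builds the model form $\om|_\Sigma+d(r\lambda)$ on $\Sigma\x(-\epsilon,\epsilon)$ and invokes the Weinstein/coisotropic neighborhood theorem, while the converse takes $\lambda=\iota_{\p/\p r}\om$ restricted to $\Sigma$ as the stabilizing form. You merely spell out the verifications the paper leaves implicit (closedness giving $\p_r\alpha_r=d_\Sigma\beta_r$, nondegeneracy giving $\beta_r(R)\ne0$, and constancy of the slice kernels in the model).
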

\begin{proof}
A closed 2-form $\om'=\om_{|M}+d(r\lambda)$ endows a sympelctic structure on $\Sigma\x(-\epsilon,\epsilon)$ for enough small $\epsilon>0$. By the Weinstein neighborhood theorem, $\Sigma\x(-\epsilon,\epsilon)$ is symplectomorphic to a neighborhood of $\Sigma$. Conversely a 1-form $\lambda:=i_\frac{\p}{\p r}\om_{|M}$ is a stabilizing 1-form on $\Sigma$.
\end{proof}
\begin{Def}\label{def:tame}
Let $(\Sigma,\lambda)$ be a stable hypersurface in a symplectic manifold $(M,\om)$ being symplectically aspherical, i.e. $\om|_{\pi_2(M)}\equiv0$. We denote by $X(\Sigma)$ the set of closed Reeb orbits in $\Sigma$ which is contractible in $M$. Then we define a function
$\Omega:X(\Sigma)\pf\R$ by
$$
\Omega(v)=\int_{D^2}\bar v^*\om.
$$
where $\bar v$ is any filling disk of $v$, i.e. $\bar v|_{\p D^2}=v$. The symplectically asphericity condition guarantees that the value of this action functional is independent of the choice of filling disks. $(\Sigma,\lambda)$ is called {\em tame} if for all $v\in X(\Sigma)$ there exists a constant $c>0$ satisfying
\beqn
\Bigg|\int_0^1v^*\lambda\Bigg|\leq c\,|\Omega(v)|.
\eeq
A stable homotopy $\{(\Sigma_r,\lambda_r)\}_{r\in[0,1]}$ is said to be {\em tame} if each $(\Sigma_r,\lambda_r)$ is tame with a constant $c>0$ independent of $r\in[0,1]$.
\end{Def}

\begin{Rmk}
There are numerous examples of stable tame or non-tame hypersurfaces in \cite{CFP,CV}.
\end{Rmk}
\begin{Rmk}
If our stable hypersurface $(\Sigma,\lambda)$ is of restricted contact type, that is $\lambda\in\Omega^1(M)$ is a 1-form on $M$ and a primitive of $\om$ on whole $M$, then it is tame with a constant $c=1$.
$$
\Bigr|\int_0^1v^*\lambda\Bigr|=\Bigr|\int_{D^2}\bar v^*\om\Bigr|=|\Omega(v)|.
$$
\end{Rmk}

\subsection{Rabinowitz Floer homology}
Let $(\Sigma,\lambda)$ be a stable hypersurface in a symplectic manifold $(M,\om)$ being symplectically aspherical and $\LLL$ be a component of contractible loop in $C^\infty(S^1,M)$. We choose a {\em defining Hamiltonian function} $H\in C^\infty(M)$ carefully (see \cite{CFP} for details) so that $H^{-1}(0)=\Sigma$, $X_H|_\Sigma=R$, and $X_H$ is compactly supported. Then the Rabinowitz action functional $\AA^H:\LLL\x\R\pf\R$ is defined by
$$
\AA^H(v,\eta):=-\int_{D^2}\bar v^*\om-\eta\int_0^1 H(v(t))dt
$$
where $\bar v$ is a filling disk of $v$. In a restricted contact manifold this Rabinowitz action functional itself gives compactness of gradient flow lines and thus Rabinowitz Floer homology can be defined. But in a stable manifold we need an aid of the auxiliary action functional $\widehat\AA^H:\LLL\x\R\pf\R$
$$
\widehat\AA^H(v,\eta)=-\int_{D^2}\bar v^*d\beta-\eta\int_0^1 H(v(t))dt.
$$
where $\beta$ is a 1-form globally defined on $M$ such that $\beta|_\Sigma=\lambda$, see \cite{CFP} for a rigorous construction of $\beta$.
A critical point of $\AA^\H$, $(v,\eta)\in\Crit\AA^H$, satisfies
\beq\label{eq:critical point equation}
\left\{
  \begin{array}{ll}
    \p_tv(t)=\eta X_H(v(t)), \\[1ex]
    H(v(t))=0.
  \end{array}
\right.
\eeq

It is noteworthy that each critical point $(v,\eta)\in\Crit\AA^H$ gives rise to a closed Reeb orbit with period $\eta$ in the following way: let $v_\eta(t):=v(t/\eta)$ for $t\in\R$, then it is $\eta$-periodic. By the second equation in \eqref{eq:critical point equation}, $v_\eta(t)$ lies in $\Sigma$ and it solves $\p_tv_\eta(t)=X_H(v_\eta(t))=R(v_\eta(t))$.

In addition we observe that for $(v,\eta)\in\Crit\AA^H$,
\bean
\bullet\quad&\bigr|\AA^H(v,\eta)\bigr|=\bigg|\int_{D^2}\bar v^*\om\bigg|=|\Omega(v)|,\\
\bullet\quad&\bigr|\widehat\AA^H(v,\eta)\bigr|=\bigg|\int_0^1 v^*\lambda\bigg|=\bigg|\eta\int_0^1 \lambda(R(v(t)))dt\bigg|=|\eta|.
\eea

Next, we note that $\AA^{H}$ is never Morse because there is a $S^1$-symmetry coming from time-shift on the critical point set. However it is known that $\AA^H$ is generically Morse-Bott (see \cite{CF}), so we are able to compute its Floer homology by choosing an auxiliary Morse function $f$ on a critical manifold $\Crit\AA^H$ and counting gradient flow lines with cascades, refer to \cite{Fr,CF}. Let us set the $\Z/2$-module by
\beq
\CF^{(a,b)}(\AA^H,f):=\Crit^{(a,b)}(f)\otimes\Z/2
\eeq
where
\beq
\Crit^{(a,b)}(f):=\big\{(v,\eta)\in\Crit f\subset\Crit\AA^H\,\big|\,f(v,\eta)\in(a,b)\big\}.
\eeq
Then it becomes a complex with the boundary operator $\p$ defined by counting gradient flow lines with cascades. Then {\em filtered Rabinowitz Floer homology} is defined by
$$
\RFH_a^b(\Sigma,M):=\HF^{(a,b)}(\AA^H)=\H\big(\CF^{(a,b)}(\AA^H),\p\big),
$$
and (full) {\em Rabinowitz Floer homology} is defined by
$$
\RFH(\Sigma,M):=\lim_{\substack{\pf\\b\to\infty}}\lim_{\substack{\longleftarrow\\a\to-\infty}}\RFH^b_a(\Sigma,M).
$$

\subsection{Invariance}

We recall the invariance result of Rabinowitz Floer homology proved by Cieliebak-Frauenfelder-Paternain. They used the continuation method to prove the invariance property and it needed clever but complicated computations. As we mentioned in the introduction, let us believe that the bifurcation method of Rabinowitz Floer theory is worked out:
\begin{itemize}
\item[(H3)] There exists a ``regular homotopy of Floer systems '' in the sense of Lee \cite{Lee1,Lee2} between any two Rabinowitz action functionals.
\end{itemize}
Then we can easily show that there is no escape of homology classes along stable tame homotopies; moreover we can prove the invariance with the relaxed condition rather that tameness.

\begin{Thm}\cite{CFP}\label{thm:CFP}
Assuming \rm{(H3)}, {\em let $\{(\Sigma_r,\lambda_r)\}_{r\in[0,1]}$ be a stable tame homotopy. Then there exist an isomorphism:}
$$
\Psi:\RFH(\Sigma_0,M)\stackrel{\cong}{\pf}\RFH(\Sigma_1,M).
$$
\end{Thm}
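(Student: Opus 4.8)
The plan is to reduce the assertion to Theorem B. Granting (H3), Lee's analysis of regular homotopies of Floer systems produces, out of a generic stable tame homotopy $\{(\Sigma_r,\lambda_r)\}_{r\in[0,1]}$ together with a suitable choice of defining Hamiltonians $\{H_r\}$ and auxiliary Morse functions on the Morse--Bott manifolds $\Crit\AA^{H_r}$, a Morse tuple $\MM=(\CC,\gamma)$ in the sense of Section~2: $C$ parametrizes the cascade generators over $r\in[0,1]$, $F_2$ is the homotopy parameter $r$, $F_3$ is the action value $\AA^{H_r}$ (generators being filtered by the $\AA^{H_r}$-value of their Morse--Bott component), and $\gamma$ counts cascade flow lines; the word ``regular'' means that only handle--slides and birth--deaths occur, which is exactly what makes $\gamma$ satisfy $(\gamma1)$--$(\gamma5)$. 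At $r=0,1$ this identifies $\HM(\MM,0)\cong\RFH(\Sigma_0,M)$ and $\HM(\MM,1)\cong\RFH(\Sigma_1,M)$, so it suffices to check that $\MM$ obeys $(\CC1)$ and the growth hypothesis (H1).

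Condition $(\CC1)$ should be automatic: a connected component of $C$ is a continuous family $(v_r,\eta_r)$ of critical points over a subinterval of $[0,1]$, and since $\{\Sigma_r\}$ is a compact family of compact hypersurfaces the loops $v_r$ remain in a fixed compact region, while the period $\eta_r$ is continuous on a compact interval and hence bounded -- so no critical point escapes (contrast Remark~\ref{Rmk:escape of a critical point}). For (H1) one runs the computation indicated after Theorem B. Choose $\{H_r\}$ with $X_{H_r}$ supported in a fixed compact set and $r\mapsto H_r$ smooth, so that $C_0:=\sup_r||\partial_r H_r||_{L^\infty}<\infty$. Along a smooth family $(v_r,\eta_r)$ of critical points the first variation of $\AA^{H_r}$ vanishes and $\om$ has no explicit $r$-dependence, hence
\[
\frac{d}{dr}\AA^{H_r}(v_r,\eta_r)=-\eta_r\int_0^1(\partial_r H_r)(v_r(t))\,dt,\qquad\text{so}\qquad\Bigl|\frac{d}{dr}\AA^{H_r}(v_r,\eta_r)\Bigr|\leq C_0\,|\eta_r|.
\]
By the two identities recalled in this section, $|\eta_r|=|\widehat\AA^{H_r}(v_r,\eta_r)|=\bigl|\int_0^1 v_r^*\lambda_r\bigr|$ and $|\AA^{H_r}(v_r,\eta_r)|=|\Omega(v_r)|$, so tameness of the homotopy with an $r$-independent constant $c$ yields $|\eta_r|\leq c\,|\Omega(v_r)|=c\,|\AA^{H_r}(v_r,\eta_r)|$. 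Therefore $|\partial_r F_3(c(r))|\leq C_0c\,|F_3(c(r))|$ for every $c\subset C$, i.e. (H1) holds with $(a,b)=(-1,1)$ and $\Phi(s)=C_0c\,|s|$, which is positive and continuous on $\R\setminus(-1,1)$ and satisfies $\int_1^\infty ds/\Phi(s)=\int_{-\infty}^{-1}ds/\Phi(s)=\infty$.

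Theorem B then gives $\HM(\MM,0)\cong\HM(\MM,1)$, that is $\RFH(\Sigma_0,M)\cong\RFH(\Sigma_1,M)$. Unwinding, this isomorphism is the composition of the elementary handle--slide and birth--death isomorphisms supplied by the three Propositions of Section~2, and the no-escape estimate established in the proof of Theorem B is precisely what ensures that, on each fixed action window, only finitely many of these are relevant before one passes to the direct/inverse limits; the composition therefore descends to a well-defined isomorphism $\Psi:\RFH(\Sigma_0,M)\stackrel{\cong}{\pf}\RFH(\Sigma_1,M)$.

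I expect the real obstacle to lie not in the estimates above but in justifying (H3) itself (Question A): the gluing and exponential-decay analysis underlying the bifurcation method must be carried out for the Rabinowitz action functional, with its Lagrange multiplier, and the resulting cascade complex must be shown to fit the axioms $(\gamma1)$--$(\gamma5)$. Conditional on that, the only Rabinowitz-specific input is the linear-in-action bound on $|\partial_r\AA^{H_r}|$ above, whose sole delicate point is the uniform control of $||\partial_r H_r||_{L^\infty}$ fed into the uniform tameness constant $c$. Finally, since Theorem C needs only $\int^\infty 1/\Phi_h\geq 1+\kappa$ rather than divergence, the very same computation relaxes the requirement from uniform tameness to, for instance, a uniform bound of the form $|\int_0^1 v_r^*\lambda_r|\leq c\,\max\{1,|\Omega(v_r)|^{\theta}\}$ with $\theta<1$ for the survival of any prescribed class $h$ -- the promised strengthening of the theorem.
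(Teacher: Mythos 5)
Your reduction and your key estimate are the same as the paper's: choose defining Hamiltonians with $\sup_r\|\p_rH_r\|_{L^\infty}<\infty$, use $\bigl|\tfrac{\p}{\p r}\AA^{H_r}(v_r,\eta_r)\bigr|\le \mathfrak{H}|\eta_r|$, the stability identity $|\eta_r|=\bigl|\widehat\AA^{H_r}(v_r,\eta_r)\bigr|=\bigl|\int_0^1v_r^*\lambda_r\bigr|$, and tameness to get $|\p_rF_3|\le c\mathfrak{H}|F_3|$, so that (H1) holds with $\Phi(s)=c\mathfrak{H}|s|$ and Theorem B applies; this is exactly how the paper proves the theorem (modulo (H3)).

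The one genuine soft spot is your verification of $(\CC1)$, i.e.\ that critical points themselves do not escape. As written it is circular: you assert that ``$\eta_r$ is continuous on a compact interval and hence bounded,'' but compactness of the component of $C$ is precisely what is at stake --- a priori the Lagrange multiplier could blow up as $r$ approaches some $r^*$ and the component would fail to be compact (the loops $v_r$ staying in a compact region does not control $\eta_r$). The paper settles this first, before invoking Theorem B: since stability makes every critical point of $\AA^{H_r}$ also a critical point of $\widehat\AA^{H_r}$, one gets $\bigl|\tfrac{\p}{\p r}\eta_r\bigr|=\bigl|\tfrac{\p}{\p r}\widehat\AA^{H_r}(v_r,\eta_r)\bigr|\le\|\p_rH_r\|_{L^\infty}|\eta_r|$, hence $|\eta_r|\le e^{\mathfrak{H}}|\eta_0|$ by Gronwall, and then $\p_tv_r=\eta_rX_{H_r}(v_r)$ with $X_{H_r}$ compactly supported confines $v_r$. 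Your own inequalities would also repair the gap (Gronwall applied to $|\p_r\AA^{H_r}|\le c\mathfrak{H}|\AA^{H_r}|$ bounds the action along a component, and tameness then bounds $|\eta_r|$), but you need to say this rather than appeal to compactness of the component; note this variant uses tameness for the non-escape of critical points, whereas the paper's argument needs only stability. The closing remark about relaxing tameness via Theorem C is in the spirit of the paper's subsequent results (logarithmic- and square-tame homotopies) and is a reasonable addition.
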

We reemphasize that this theorem is proved by \cite{CFP} using the continuation method without (H3).
\begin{proof}
At first we prove that critical points of the Rabinowitz action functional survive during the homotopy. We choose defining Hamiltonian functions $H_r$ for $\Sigma_r$ and $||\p_r H_r||_{L^\infty}<\infty$. We note that if $(v_r,\eta_r)\in\Crit\AA^{H_r}$, then $(v_r,\eta_r)\in\Crit\widehat\AA^{H_r}$ by the stability condition. Using this fact, for $(v_r,\eta_r)\in\Crit\AA^{H_r}$ we compute
\bea
\Big|\frac{\p}{\p r}\eta_r\Big|&=\Big|\frac{\p}{\p r}\widehat\AA^{H_r}(v_r,\eta_r)\Big|\\
&=\Big|d\widehat\AA^{H_r}(v_r,\eta_r)[\p_rv_r,\p_r\eta_r]+\eta_r\int_0^1\p_rH_r(t,v_r(t))dt\Big|\\
&\leq||\p_rH_r||_{L^\infty}|\eta_r|.
\eea
Let $\mathfrak{H}:=\max_{r\in[0,1]}||\p_rH_r||_{L^\infty}$. It directly follows that
$$
|\eta_r|\leq e^\mathfrak{H}|\eta_0|,
$$
thus $\eta_r$ is bounded in terms of the initial value $\eta_0$. From the equation $\p_t v_r(t)=\eta_r X_{H_r}(t,v_r)$, we conclude that a critical point $(v_r,\eta_r)$ does not escape.

In order to show the invariance property for Rabinowitz Floer homology, it remains to show that there is no escape of homology classes. We observe that the tameness implies the hypothesis (H1). We compute
\bean
\Bigg|\frac{\p}{\p r}\AA^{H_r}(v_r,\eta_r)\Bigg|&=\Bigg|\underbrace{d\AA^{H_r}(v_r,\eta_r)}_{=0}[\p_rv_r,\p_r\eta_r]+\eta_r\int_0^1\p_r H_r(t,v_r(t))dt\Bigg|\\
&\leq \mathfrak{H}|\eta_r|\\
&=\mathfrak{H}\big|\widehat\AA^{H_r}(v_r,\eta_r)\big|\\
&\leq c\mathfrak{H}\big|\AA^{H_r}(v_r,\eta_r)\big|.
\eea
With $\Phi(s)=c\mathfrak{H}\cdot |s|$ our hypothesis (H1)
$$
\frac{1}{c\mathfrak{H}}\int_1^\infty\frac{1}{s}ds=\infty\quad\&\quad\frac{1}{c\mathfrak{H}}\int_{-\infty}^{-1}  -\frac{1}{s}ds=\infty
$$
holds and hence Rabinowitz Floer homology is invariant by Theorem B.
\end{proof}

\begin{Def}
We refer to a stable hypersurface $(\Sigma,\lambda)$ as {\em logarithmic-tame} if there exists $c>0$ such that the following holds: For all $v\in X(\Sigma)$,
$$
\Bigg|\int_0^1v^*\lambda\Bigg|\leq c\,|\Omega(v)|\x\log|\Omega(v)|\x\log\log|\Omega(v)|\x\cdots\x\log\cdots\log|\Omega(v)|.
$$
A stable homotopy $\{(\Sigma_r,\lambda_r)\}_{r\in[0,1]}$ is called {\em logarithmic-tame} if each $(\Sigma_r,\lambda_r)$ is logarithmic-tame with a uniform constant $c>0$.
\end{Def}
\begin{Thm}
Rabinowitz Floer homology is invariant along a stable logarithmic-tame homotopy $\{(\Sigma_r,\lambda_r)\}_{r\in[0,1]}$.
\end{Thm}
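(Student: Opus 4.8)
The plan is to imitate the proof of Theorem~\ref{thm:CFP} almost line by line, the only change being that the linear comparison function $\Phi(s)=c\mathfrak{H}|s|$ used there is replaced by an iterated-logarithmic one; I then check that this larger $\Phi$ still satisfies the divergence condition in hypothesis (H1), so that Theorem~B applies. Throughout I assume (H3), which supplies the bifurcation framework in which the Morse-tuple axioms hold for the Rabinowitz action functional.

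First I would rule out the escape of critical points exactly as in Theorem~\ref{thm:CFP}: choose a smooth family of defining Hamiltonians $\{H_r\}_{r\in[0,1]}$ for $\{\Sigma_r\}_{r\in[0,1]}$ and put $\mathfrak{H}:=\max_{r\in[0,1]}\|\p_rH_r\|_{L^\infty}$, which is finite since $[0,1]$ is compact and each $X_{H_r}$ is compactly supported. For $(v_r,\eta_r)\in\Crit\AA^{H_r}$ the stability condition gives $(v_r,\eta_r)\in\Crit\widehat\AA^{H_r}$, whence $\big|\tfrac{\p}{\p r}\eta_r\big|\le\mathfrak{H}|\eta_r|$ and $|\eta_r|\le e^{\mathfrak{H}}|\eta_0|$; together with $\p_tv_r=\eta_rX_{H_r}(v_r)$ this shows that no critical point escapes.

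The heart of the matter is the escape of homology classes. Along a curve of critical points I chain three bounds: $\big|\tfrac{\p}{\p r}\AA^{H_r}(v_r,\eta_r)\big|\le\mathfrak{H}|\eta_r|$, because $d\AA^{H_r}$ vanishes there; $|\eta_r|=\big|\widehat\AA^{H_r}(v_r,\eta_r)\big|=\big|\int_0^1v_r^*\lambda_r\big|$; and the logarithmic-tameness estimate $\big|\int_0^1v_r^*\lambda_r\big|\le c\,|\Omega(v_r)|\log|\Omega(v_r)|\cdots\log\cdots\log|\Omega(v_r)|$ with a constant $c$ uniform in $r$ (this uniformity being part of the definition of a logarithmic-tame homotopy), combined with $|\Omega(v_r)|=\big|\AA^{H_r}(v_r,\eta_r)\big|$. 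Chaining these yields
$$
\Big|\frac{\p}{\p r}\AA^{H_r}(v_r,\eta_r)\Big|\le c\mathfrak{H}\,\big|\AA^{H_r}(v_r,\eta_r)\big|\,\log\big|\AA^{H_r}(v_r,\eta_r)\big|\cdots\log\cdots\log\big|\AA^{H_r}(v_r,\eta_r)\big|.
$$
Let $k$ be the finite number of iterated logarithms in the definition of logarithmic-tame, fix $a<0<b$ with $|a|,|b|$ so large that $\log^{(k)}|s|$ is defined and positive on $\R\setminus(a,b)$, and set $\Phi(s):=c\mathfrak{H}\,|s|\log|s|\cdots\log^{(k)}|s|$, which is continuous and positive there. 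The displayed estimate says exactly $|\p_rF_3|\le\Phi(F_3)$ for the Rabinowitz Cerf tuple ($F_3=\AA^{H_r}$ at critical points), while the elementary antiderivative $\int\frac{ds}{s\log s\cdots\log^{(k)}s}=\log^{(k+1)}s+\mathrm{const}$ shows $\int_b^\infty\frac{ds}{\Phi(s)}=\infty=\int_{-\infty}^a\frac{ds}{\Phi(s)}$. Thus (H1) holds and Theorem~B gives the claimed invariance.

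The main — essentially the only — subtle point is the calculus fact that $\int^\infty ds/(s\log s\cdots\log^{(k)}s)$ diverges for every finite $k$, even though inserting one more factor $\log^{(k+1)}s$ would make it converge; this is precisely why the definition of logarithmic-tame must involve only finitely many nested logarithms, and it shows the argument is sharp. The remaining ingredients — finiteness of $\mathfrak{H}$, uniformity of $c$ in $r$, and the identification of $F_3$ with $\AA^{H_r}$ at critical points — are routine and are treated exactly as in the proof of Theorem~\ref{thm:CFP}.
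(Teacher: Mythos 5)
Your proposal is correct and follows essentially the same route as the paper: reduce to Theorem~\ref{thm:CFP} for the non-escape of critical points, chain the estimates $|\p_r\AA^{H_r}|\le\mathfrak{H}|\eta_r|=\mathfrak{H}|\widehat\AA^{H_r}|$ with the logarithmic-tameness bound to get $\Phi(s)=c\mathfrak{H}|s|\log|s|\cdots\log\cdots\log|s|$, verify the divergence in (H1), and invoke Theorem~B. The only difference is that you spell out the verification of (H1) (the iterated-logarithm antiderivative) which the paper merely asserts; this is a welcome addition, not a deviation.
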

\begin{proof}
As in the proof of Theorem \ref{thm:CFP}, critical points of the Rabinowitz action functional do not escape. It it enough to show that homology classes also never escape. In this case we take a function
$$
\Phi(s)=c\mathfrak{H}|s|\log |s|\x\log\log |s|\x\cdots\x\log\cdots\log |s|.
$$
It satisfies (H1) and thus Theorem B finishes the proof.
\end{proof}
\begin{Def}
We refer to a stable hypersurface $(\Sigma,\lambda)$ as {\em square-tame} if there exists $c>0$ such that the following holds: For all $v\in X(\Sigma)$,
$$
\Bigg|\int_0^1v^*\lambda\Bigg|\leq c|\Omega(v)|^2.
$$
A stable homotopy $\{(\Sigma_r,\lambda_r)\}_{r\in[0,1]}$ is called {\em square-tame} if each $(\Sigma_r,\lambda_r)$ is square-tame with a uniform constant $c>0$.
\end{Def}
In the square-tame homotopy case, Theorem B works no longer because we have
$$
\int_1^\infty\frac{1}{cs^2}ds=\frac{1}{c}<\infty.
$$
On the other hand, if we know the spectral value (defined in \eqref{eq:spectral value}) of a given homology class and this value is small enough, then the homology class cannot escape during  square-tame homotopies.
\begin{Thm}
Suppose that there exists a constant $\kappa>0$ such that for a square-tame homotopy $\{(\Sigma_r,\lambda_r)\}_{r\in[0,1]}$ with a tame constant $c>0$, the following holds.
$$
\frac{-1}{c+c\kappa}\leq\rho(h,0)\leq \frac{1}{c+c\kappa}\,\,\, \textrm{ for some}\,\,\, h\in\RFH(\Sigma_0,M).
$$
Then the homology class $h\in\RFH(\Sigma_0,M)$ survives along the homotopy.
\end{Thm}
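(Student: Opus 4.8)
The plan is to run the argument of Theorem~\ref{thm:CFP} essentially verbatim, with square-tameness replacing tameness and Theorem~C replacing Theorem~B. Working under (H3), the family of Rabinowitz action functionals $\{\AA^{H_r}\}_{r\in[0,1]}$ (with homology computed from Morse functions on the Morse--Bott critical manifolds and flow lines with cascades) is a Morse tuple $\MM$ in the sense of Section~2: the parametrized critical set is the $1$-manifold $C$, $F_2$ records the homotopy parameter $r$, and $F_3(c(r))=\AA^{H_r}(v_r,\eta_r)$. Through this dictionary the spectral value $\rho(h,r)$ of \eqref{eq:spectral value} is defined, so it suffices to verify hypothesis (H2) for $h$ and quote Theorem~C.

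I would first recall, in order to rule out the escape of generators, that critical points stay in a compact set. Fix defining Hamiltonians $H_r$ for $\Sigma_r$ with $\mathfrak{H}:=\max_{r\in[0,1]}\|\p_rH_r\|_{L^\infty}<\infty$. Exactly as in the proof of Theorem~\ref{thm:CFP}, for $(v_r,\eta_r)\in\Crit\AA^{H_r}$ (which by stability also lies in $\Crit\widehat{\AA}^{H_r}$) one gets $|\p_r\eta_r|\leq\|\p_rH_r\|_{L^\infty}|\eta_r|$ --- here only the $L^\infty$-bound, not tameness, enters --- hence $|\eta_r|\leq e^{\mathfrak H}|\eta_0|$ by Gronwall, and the critical point equation \eqref{eq:critical point equation} then confines $(v_r,\eta_r)$. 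So invariance can fail only through the escape of a homology class.

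The core step is the differential inequality feeding (H2). Since $d\AA^{H_r}(v_r,\eta_r)=0$ along critical points,
\[
\Big|\frac{\p}{\p r}\AA^{H_r}(v_r,\eta_r)\Big|=\Big|\eta_r\!\int_0^1(\p_rH_r)(v_r(t))\,dt\Big|\leq\mathfrak{H}\,|\eta_r|=\mathfrak{H}\,\big|\widehat{\AA}^{H_r}(v_r,\eta_r)\big|=\mathfrak{H}\,\Big|\!\int_0^1 v_r^*\lambda_r\Big|,
\]
and applying square-tameness to the closed Reeb orbit underlying $(v_r,\eta_r)$, together with $|\AA^{H_r}(v_r,\eta_r)|=|\Omega(v_r)|$, the right-hand side is $\leq c\mathfrak{H}\,|\AA^{H_r}(v_r,\eta_r)|^2$. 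In the dictionary above this reads $|\p_rF_3(c(r))|\leq c\mathfrak{H}\,F_3(c(r))^2$, so I would take $\Phi_h(s):=c\mathfrak{H}\,s^2$, which is positive on $\R\setminus(-\epsilon,\epsilon)$ for any small $\epsilon>0$.

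It then remains to verify (H2). With $M=\max\{\epsilon,\rho(h,0)\}$ and $m=\min\{-\epsilon,\rho(h,0)\}$ one computes
\[
\int_M^\infty\frac{ds}{c\mathfrak{H}\,s^2}=\frac{1}{c\mathfrak{H}\,M},\qquad \int_{-\infty}^{m}\frac{ds}{c\mathfrak{H}\,s^2}=\frac{1}{c\mathfrak{H}\,|m|},
\]
and, taking $\epsilon$ small, both are $\geq 1+\kappa$ once $|\rho(h,0)|\leq\frac{1}{c\mathfrak{H}(1+\kappa)}$; Theorem~C then yields that $h$ survives. The one delicate point --- and, apart from the standing assumption (H3) that Rabinowitz Floer theory genuinely realizes the axioms $(\CC1)$, $(\CC2)$, $(\gamma1)$--$(\gamma5)$, the only nonroutine one --- is to match this with the stated bound $|\rho(h,0)|\leq\frac{1}{c(1+\kappa)}$: one must choose the defining Hamiltonians $H_r$ (reparametrizing the homotopy if necessary) so that $\mathfrak{H}\leq 1$, whence $\frac{1}{c(1+\kappa)}\leq\frac{1}{c\mathfrak{H}(1+\kappa)}$ and the gap closes. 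I expect this bookkeeping with the constants, rather than any new analytic input, to be the main obstacle; the rest is a transcription of Theorems~\ref{thm:CFP} and~C.
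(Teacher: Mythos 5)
Your proposal follows the paper's own route exactly: the paper likewise verifies (H2) for $\Phi(s)$ proportional to $s^{2}$, computes $\int_{1/(c+c\kappa)}^{\infty}\frac{ds}{cs^{2}}=1+\kappa$ and $\int_{-\infty}^{-1/(c+c\kappa)}\frac{ds}{cs^{2}}=1+\kappa$, and then quotes Theorem C, after the same preliminary observation (from the proof of Theorem \ref{thm:CFP}) that $|\eta_r|\leq e^{\mathfrak H}|\eta_0|$ prevents the escape of critical points. So in structure your argument and the paper's coincide; the only divergence is the bookkeeping of the constant $\mathfrak H=\max_r\|\p_rH_r\|_{L^\infty}$, which you keep and the paper silently drops (its $\Phi(s)=cs^{2}$ should really be $c\mathfrak H s^{2}$, exactly as in its own logarithmic-tame theorem where $\mathfrak H$ does appear).

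The one step of yours that does not work is the proposed remedy for this mismatch, namely ``choose the $H_r$ or reparametrize the homotopy so that $\mathfrak H\leq 1$.'' Reparametrizing $r\mapsto\phi(s)$ with $\phi:[0,1]\to[0,1]$ surjective gives $\p_s H_{\phi(s)}=\phi'(s)\,\p_rH_r|_{\phi(s)}$, and $\max_s\|\p_sH_{\phi(s)}\|_{L^\infty}\geq\int_0^1\|\p_rH_r\|_{L^\infty}dr$, which is reparametrization-invariant; nor can the defining Hamiltonians themselves absorb it, since near $\Sigma_r$ they are pinned by $H_r^{-1}(0)=\Sigma_r$ and $X_{H_r}|_{\Sigma_r}=R_r$, so $\mathfrak H$ genuinely records how fast the homotopy moves. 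The honest conclusion of your computation is therefore a threshold $|\rho(h,0)|\leq\frac{1}{c\mathfrak H(1+\kappa)}$, i.e. the theorem as stated holds with $c$ replaced by $c\mathfrak H$ (equivalently, after absorbing $\mathfrak H$ into the tame constant, which is evidently what the paper intends). This is a constant-level imprecision shared with, indeed inherited from, the paper's own proof; apart from it, your argument is correct and is the same proof.
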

\begin{proof} The hypothesis (H2) holds with $\Phi(s)=1/cs^2$ since
$$
\int_{1/(c+c\kappa)}^\infty\frac{1}{cs^2}\,ds=1+\kappa\quad\&\quad\int_{-\infty}^{{-1/(c+c\kappa)}}\frac{1}{cs^2}ds=1+\kappa.
$$
Therefore Theorem C concludes the proof of the theorem.
\end{proof}
We also can ask if Rabinowitz Floer homology depends on the choice of symplectic forms on $M$. In general, there is no answer yet but as before Rabinowitz Floer homology is invariant with a suitable stability and tameness condition defined below.
\begin{Def}
Let $\lambda_0$ resp. $\lambda_1$ be stabilizing 1-forms on $(M,\Sigma,\om_0)$ resp. $(M,\Sigma,\om_1)$. A smooth homotopy $\{(\om_r,\lambda_r)\}_{r\in[0,1]}$ is called {\em stable} if each $\om_r$ gives symplectic structure on $M$ and $\lambda_r$ is a stabilizing 1-form on $(M,\Sigma,\om_r)$.
\end{Def}
To define the tameness condition and the Rabinowitz action functional, we assume that each $(M,\om_r)$ is symplectically aspherical.
\begin{Def}
A stable homotopy $\{(\om_r,\lambda_r)\}_{r\in[0,1]}$ is said to be {\em tame} if there exists a constant $c>0$ independent of $r\in[0,1]$ such that
$$
\bigg|\int_0^1 v^*\lambda_r\bigg|\leq c|\Omega_r(v)|,\quad v\in X(\Sigma)
$$
where $\Omega_r(v)=\int_{D^2}\bar v^*\om_r$ for $r\in[0,1]$. Instead of the above formula, if it holds that
$$
\bigg|\int_0^1 v^*\lambda_r\bigg|\leq c|\Omega_r(v)|\x\log|\Omega_r(v)|\x\log\log|\Omega_r(v)|\x\cdots\x\log\cdots\log|\Omega_r(v)|,
$$
then we say that a stable homotopy $\{(\om_r,\lambda_r)\}_{r\in[0,1]}$ is {\em logarithmic-tame}.
\end{Def}

Let us indicate the dependency of the symplectic structure on the Rabinowitz action functional and Rabinowitz Floer homology in the following way. We define the Rabinowitz action functional on $(M,\om_r)$ by
$$
\AA^\H_{\om_r}(v,\eta)=-\int_{D^2}\bar v^*\om_r-\eta\int_0^1H(v(t))dt.
$$
With this action funtional, we can define Rabinowitz Floer homology $\RFH(\Sigma,M,\om_r)$ for a stable hypersurface $(\Sigma,\lambda_r)$ in $(M,\om_r)$ as before.
\begin{Thm}
Let $\{(\om_r,\lambda_r)\}_{r\in[0,1]}$ be a stable and logarithmic-tame homotopy. If assuming \rm{(H3)}, we have
$$
\RFH(\Sigma,M,\om_0)\cong\RFH(\Sigma,M,\om_1).
$$
\end{Thm}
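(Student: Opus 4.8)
The plan is to transcribe the proof of Theorem~\ref{thm:CFP}, with the deformation of the symplectic form $\om_r$ now playing the role that the deformation of the hypersurface $(\Sigma_r,\lambda_r)$ played there. Granting \rm{(H3)}, a regular homotopy of Floer systems joins $\AA^{H_0}_{\om_0}$ to $\AA^{H_1}_{\om_1}$, so by Lee's analysis the homotopy of Rabinowitz functionals is modelled by a Morse tuple $\MM$ satisfying all axioms of Section~2, and the whole problem reduces to two assertions: (i) critical points of $\AA^{H_r}_{\om_r}$ do not escape to infinity in $M\x\R$, so that the $1$-manifold $C$ of Section~2 has compact components; and (ii) the hypothesis \rm{(H1)} is met for a suitable growth function $\Phi$. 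Granting these, Theorem~B furnishes the isomorphism $\RFH(\Sigma,M,\om_0)\cong\RFH(\Sigma,M,\om_1)$.

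For (i), fix a smooth family of defining Hamiltonian functions $H_r$ with $H_r^{-1}(0)=\Sigma$ for all $r$ and $X_{H_r}|_\Sigma=R_r$, the Reeb field of $\lambda_r$, together with a smooth family of global one-forms $\beta_r$ with $\beta_r|_\Sigma=\lambda_r$, so that the auxiliary functional $\widehat\AA^{H_r}_{\om_r}$ is available. The stability condition gives $\Crit\AA^{H_r}_{\om_r}\subset\Crit\widehat\AA^{H_r}_{\om_r}$, and on critical points $|\widehat\AA^{H_r}_{\om_r}(v_r,\eta_r)|=|\eta_r|$. Differentiate along a smooth path $r\mapsto(v_r,\eta_r)$ of critical points: since $(v_r,\eta_r)$ is critical the $d\widehat\AA$-term drops out, the $\p_rH_r$-term vanishes because $H_r\equiv0$ on the fixed $\Sigma$ and $v_r\subset\Sigma$, and only $-\int_{D^2}\bar v_r^*\,d(\p_r\beta_r)$ survives; by Stokes and $\p_tv_r=\eta_rR_r(v_r)$ this equals $-\eta_r\int_0^1(\p_r\lambda_r)(R_r(v_r))\,dt$, which compactness of $\Sigma$ bounds by $\mathfrak{L}|\eta_r|$ for a uniform constant $\mathfrak{L}$. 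Gronwall yields $|\eta_r|\le e^{\mathfrak{L}}|\eta_0|$; combined with $\p_tv_r=\eta_rX_{H_r}(v_r)$ and $v_r\subset\Sigma$ compact, this confines each critical point to a fixed compact set, so none escapes.

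For (ii), differentiate $\AA^{H_r}_{\om_r}$ itself along the critical path. Again the $d\AA$-term and the $\p_rH_r$-term vanish, leaving $\tfrac{d}{dr}\AA^{H_r}_{\om_r}(v_r,\eta_r)=-\int_{D^2}\bar v_r^*\dot\om_r$ with $\dot\om_r:=\p_r\om_r$. This is a disk integral rather than a loop integral, and reducing it is the crux: I would write $\dot\om_r=d\sigma_r$ for a smooth family of primitive one-forms $\sigma_r$ on $M$ --- available once the $\om_r$ remain in a single de~Rham cohomology class along the homotopy, a hypothesis I would impose (and which, since the $\om_r$ are all symplectically aspherical, is harmless) --- so that Stokes and $\p_tv_r=\eta_rR_r(v_r)$ give $\int_{D^2}\bar v_r^*\dot\om_r=\int_0^1v_r^*\sigma_r=\eta_r\int_0^1\sigma_r(R_r(v_r))\,dt$, whence $\bigl|\tfrac{d}{dr}\AA^{H_r}_{\om_r}(v_r,\eta_r)\bigr|\le\mathfrak{S}|\eta_r|$ with $\mathfrak{S}$ a uniform constant. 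Now invoke logarithmic-tameness: on critical points $|\eta_r|=\bigl|\int_0^1v_r^*\lambda_r\bigr|\le c\,|\Omega_r(v_r)|\x\log|\Omega_r(v_r)|\x\cdots\x\log\cdots\log|\Omega_r(v_r)|$, while $|\Omega_r(v_r)|=|\AA^{H_r}_{\om_r}(v_r,\eta_r)|$, which is exactly the quantity $F_3$ of the abstract framework (as in the proof of Theorem~\ref{thm:CFP}). Hence $\bigl|\tfrac{\p}{\p r}F_3(c(r))\bigr|\le\Phi\bigl(F_3(c(r))\bigr)$ with
$$\Phi(s)=\mathfrak{S}c\,|s|\x\log|s|\x\log\log|s|\x\cdots\x\log\cdots\log|s|,$$
and $\Phi$ satisfies \rm{(H1)} exactly as in the logarithmic-tame invariance theorem above. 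Theorem~B now applies and gives the isomorphism.

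The only step that is not a routine repetition of Theorem~\ref{thm:CFP} and its logarithmic-tame refinement is the passage from the disk integral $\int_{D^2}\bar v_r^*\dot\om_r$ to a controlled loop integral in (ii): this is where the new feature --- varying the symplectic form rather than the hypersurface --- genuinely enters, and it forces either the cohomology-class restriction used above or some substitute estimate bounding $\int_{D^2}\bar v_r^*\dot\om_r$ in terms of $|\Omega_r(v_r)|$. Everything else is bookkeeping identical to the stable case.
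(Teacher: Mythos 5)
Your overall architecture coincides with the paper's: fix the auxiliary functional $\widehat\AA^H_{\lambda_r}$, show critical points do not escape by differentiating it in $r$ (your bound $\mathfrak{L}|\eta_r|$ is the paper's $\mathfrak{R}|\eta_r|$), then reduce the survival of homology classes to hypothesis (H1) with $\Phi(s)=\mathrm{const}\cdot|s|\log|s|\x\cdots\x\log\cdots\log|s|$ and invoke Theorem~B under (H3). The place where your argument falls short of the stated theorem is exactly the step you yourself single out as the crux: the disk integral $\int_{D^2}\bar v_r^*\dot\om_r$. You reduce it to a loop integral by assuming $\dot\om_r=d\sigma_r$ on $M$, i.e.\ that the de Rham class $[\om_r]\in\H^2(M;\R)$ is constant along the homotopy, and you assert this is harmless because each $\om_r$ is symplectically aspherical. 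That justification is incorrect: asphericity only says $\om_r$ vanishes on the image of $\pi_2(M)$ in $H_2(M)$, and puts no restriction on how $[\om_r]$ moves in the non-spherical directions of $\H^2(M;\R)$ (already on $T^2$, $\om_r=(1+r)\,dx\wedge dy$ is an aspherical family with varying class). So as written you prove the theorem only under an extra cohomological hypothesis that is not in the statement, and the theorem is meant to cover precisely such families.

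The missing idea, which is how the paper handles it, is to lift to the universal cover: for $\tilde v_r$ a lift of $v_r$ to $(\widetilde M,\widetilde\om_r)$, symplectic asphericity together with Hurewicz makes $\widetilde\om_r$, hence $\dot{\widetilde\om}_r$, exact on $\widetilde M$, so one may choose a primitive $\sigma$ of $\dot{\widetilde\om}_r$ there and write $\int_{D^2}\bar v_r^*\dot\om_r=\int_{S^1}\tilde v_r^*\sigma$ with no restriction on $[\om_r]$. The price is that $\sigma$ lives on a noncompact space, so one must check that $\mathfrak{S}=\sup\|\sigma\|_{\tilde g}$ over the relevant lifted loops is finite; the paper does this by observing that, modulo iteration and time shift, there are only finitely many nonconstant critical orbit classes, so the lifts can be chosen with compact union (a fundamental-domain normalization). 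This gives $\bigl|\frac{\p}{\p r}\AA^H_{\om_r}(v_r,\eta_r)\bigr|\le\Theta|\eta_r|$ with $\Theta=\mathfrak{S}\,\|X_H|_\Sigma\|_{L^\infty}$, after which logarithmic tameness and Theorem~B conclude exactly as you describe. If you either add this universal-cover argument or genuinely restrict the theorem to homotopies with fixed $[\om_r]$, your proof is complete; in its present form the asphericity claim is the gap.
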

\begin{proof}

As before, we define an auxiliary action functional $\widehat\AA^H_{\lambda_r}:\LLL\x\R\pf\R$ by
$$
\widehat\AA^H_{\lambda_r}(v,\eta)=-\int_{D^2}\bar v^*d\beta_r-\eta\int_0^1H(t,v(t))dt
$$
where $\beta_r\in\Omega^1(M)$ is an extension of $\lambda_r\in\Omega^1(\Sigma)$, i.e. $\beta_r|_\Sigma=\lambda_r$, see \cite{CFP} for a rigorous construction of $\beta_r$. For $(v_r,\eta_r)\in\Crit\AA^H_{\om_r}$, it holds that
\beq
\bigg|\frac{\p}{\p r}\eta_r\bigg|=\bigg|\frac{\p}{\p r}\widehat\AA^H_{\lambda_r}(v_r,\eta_r)\bigg|=\bigg|\int_{S^1}v_r^*\dot\lambda_r\bigg|=\mathfrak{R}|\eta_r|.
\eeq
where $\mathfrak{R}:=\max_{r\in[0,1]}||\dot\lambda_r(R_r)||_{L^\infty(\Sigma)}$, $R_r$ is the Reeb vector field with respect to $\lambda_r$. As in the proof of Theorem \ref{thm:CFP}, the above computation yields that critical points do not escape. Next we show the survival of homology classes. We consider the universal cover $(\widetilde M,\widetilde\om_r)$ of $M$ where $\widetilde\om_r$ is the lift of $\om_r$. We choose a compatible almost complex structure $J_r$ on $(M,\om_r)$ so that $g_r(\cdot,\cdot):=\om_r(\cdot,J_r\cdot)$ is a Riemannian metric on $M$. Then we lift $g_r$ to $\widetilde M$, say $\tilde g_r$. Let $\widetilde \Sigma_{\star}(\cong\Sigma)$ be one of the fundamental domains in $\widetilde\Sigma\subset\widetilde M$ and $\tilde v_r:S^1\pf \widetilde M$ intersecting $\Sigma_{\star}$ be the lift of $v_r$. Since we have assumed the symplectical asphericity of $(M,\om_r)$, there exists a primitive 1-form $\sigma$ of $\dot{\widetilde\om}_r$. We let
$$
\mathfrak{S}:=\max_{x\in \tilde v(S^1)}\{||\sigma(x)||_{\tilde g}\,|\,(v,\eta)\in\Crit\AA^H\}.
$$
We define an equivalence relation such that $(v,\eta)\sim (v_0,\eta_0)$ if $\big(v(t),\eta\big)=\big(v_0(nt),n\eta_0\big)$ for some $2\leq n\in\N$ or $v(t)=v(t+r)$ for some $r\in S^1$. We note that there are only finitely many nonconstant representative classes and we can lift $v$ and $v_0$ with $(v,\eta)\sim(v_0,\eta_0)$ so that $\tilde v(S^1)=\tilde v_0(S^1)$. Thus $\mathfrak{S}$ has finite value since $\bigcup_{(v,\eta)\in\Crit\AA^H}\tilde v(S^1)$ is compact. Now we compute
\bean
\bigg|\frac{\p}{\p r}\AA^H_{\om_r}(v_r,\eta_r)\bigg|&=\bigg|\int_{D^2}\bar v_r^*(\dot\om_r)\bigg|=\bigg|\int_{D^2}\bar{\tilde v}_r^*\dot{\widetilde\om}_r\bigg|
=\bigg|\int_{S^1}\tilde{ v}_r^*\sigma\bigg|\\
&\leq\mathfrak{S}\int_{S^1}||\p_t\tilde v_r||_{\tilde g}dt\\
&=\mathfrak{S}\int_{S^1}||\p_t v_r||_{g}dt\\
&\leq\mathfrak{S}|\eta_r|\int_{S^1}||X_H(v_r)||_{g}dt\\
&\leq \Theta|\eta_r|\\
&=\Theta\bigr|\widehat\AA^H_{\lambda_r}(v_r,\eta_r)\bigr|\\
&\leq c\Theta\bigr|\AA^H_{\om_r}(v_r,\eta_r)\bigr|\x\log\bigr|\AA^H_{\om_r}(v_r,\eta_r)\bigr|\x\cdots\x\log\cdots\log\bigr|\AA^H_{\om_r}(v_r,\eta_r)\bigr|.
\eea
where $\Theta=\mathfrak{S}||{X_H}|_\Sigma||_{L^\infty}$ and $c$ is the tame constant. This computation shows that a stable and logarithmic-tame homotopy satisfies the hypothesis (H1) with the function
$$
\Phi(s)=c\Theta|s|\log |s|\x\log\log |s|\x\cdots\x\log\cdots\log |s|
$$
and then Theorem B concludes the proof.
\end{proof}
\begin{Rmk}
We expect that the previous theorem also can be proved by the continuation method without assuming (H3); we refer to \cite{BF} for the continuation method in the virtually contact case. Without doubt, our arguments are also valid in the virtually contact case.
\end{Rmk}

\section{Appendix: Legendrian and pre-Lagrangian}
In this appendix, we briefly recall a part of the contact geometry, Legendrian curves and pre-Lagrangian submanifolds; we refer to \cite{EHS,Ge} for the deeper and wider concepts.
\begin{Def}
Let $M$ be a manifold of dimension $2n+1$. A contact structure on $M$ is a maximally non-integrable hyperplane field $\xi=\ker\alpha\subset TM$, $\alpha\in\Omega^1(M)$, i.e. $\alpha\wedge(d\alpha)^n\neq0$. Such a 1-form $\alpha$ is called a {\em contact form} and the pair $(M,\xi)$ is called a {\em contact manifold}.
\end{Def}
A defining 1-form $\alpha$ is unique up to nowhere vanishing functions, that is, $\ker\alpha=\ker f\alpha$ for any nowhere vanishing function $f$ on $M$.
Let $\mathcal{S}(M,\xi)$ be the trivial subbundle of $T^*M$ whose fiber over $q\in M$ consists of all non-zero linear forms annihilating $\xi_q\subset T_qM$ and defining its coorientation. The bundle $\mathcal{S}(M,\xi)$ is a principal $\R$-bundle with the $\R$-action:
$$
r\cdot \Theta=e^r\Theta,\quad r\in\R,\,\,\Theta\in\mathcal{S}(M,\xi).
$$
Furthermore the canonical 1-form $\lambda=pdq$ on $T^*M$ gives a symplectic structure $d\lambda|_{\mathcal{S}(M,\xi)}$ on $\mathcal{S}(M,\xi)$. The symplectic manifold
$$(\mathcal{S}(M,\xi),d\lambda|_{\mathcal{S}(M,\xi)})$$
is called a {\em symplectization} of $(M,\xi)$. We note that a section of the bundle $\pi:\mathcal{S}(M,\xi)\pf M$ is a contact form.

\begin{Def}
An $(n+1)$-dimensional submanifold $L$ of a contact manifold $(M,\xi)$ satisfying the following two properties, is called {\em pre-Lagrangian}.
\begin{itemize}
\item[(i)] $L$ is transverse to $\xi$,
\item[(ii)] The distribution $\xi\cap TL$ is integrable and can be defined by a closed 1-form.
\end{itemize}
\end{Def}
The motivation of the notion of ``pre-Lagrangian" is provided the following proposition.
\begin{Prop}\cite{EHS}
If $L$ is a pre-Lagrangian submanifold in $(M,\xi)$ then there exists a Lagrangian submanifold $\widetilde L$ in the symplectic manifold $\mathrm{Sympl}(M,\xi)$ such that $\pi(\widetilde L)=L$. The cohomology class $\lambda\in\H^1(L;\R)$ such that $\pi^*\lambda=[\alpha|_{\widetilde L}]$ is defined uniquely up to multiplication by a non-zero constant. Conversely if $\widetilde L\subset \mathcal{S}(M,\xi)$ is a Lagrangian submanifold then $\pi(\widetilde L)=L$ is a pre-Lagrangian in $M$.
\end{Prop}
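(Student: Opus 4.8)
The plan is to prove this classical correspondence of Eliashberg--Hofer--Salamon by writing down the Lagrangian lift explicitly and then reading off every asserted property from the tautological $1$-form on the symplectization. Throughout I write $\theta$ for the Liouville form $\lambda=p\,dq$ restricted to $\mathcal{S}(M,\xi)$, so that $d\theta$ is the symplectic form of $\mathcal{S}(M,\xi)$. For the forward implication I would first fix a contact form $\alpha$ with $\ker\alpha=\xi$ inducing the given coorientation; this is a global section of $\pi\colon\mathcal{S}(M,\xi)\to M$, and every section has the form $e^{h}\alpha$ with $h\in C^{\infty}(M)$. Since $L$ is transverse to $\xi$ and $\dim L=n+1$, the pullback $\alpha|_{L}$ is nowhere zero with $\ker(\alpha|_{L})=\xi\cap TL$, a corank-one distribution on $L$; by hypothesis (ii) there is a closed $1$-form $\beta$ on $L$ with $\ker\beta=\xi\cap TL$, and after a sign change on each component we may assume $\alpha|_{L}=g\,\beta$ with $g>0$ (the coorientation makes ``positive'' unambiguous). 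Then I would set $\widetilde L:=s(L)$ for the section $s\colon L\to\mathcal{S}(M,\xi)$, $s(x):=g(x)^{-1}\alpha_{x}$; this is an embedded $(n+1)$-manifold with $\pi|_{\widetilde L}$ a diffeomorphism onto $L$. The only computation needed is that, by the defining property of $\theta$, the pullback $s^{*}\theta$ is the $1$-form $x\mapsto s(x)|_{T_{x}L}=g(x)^{-1}(\alpha|_{L})_{x}=\beta_{x}$, which is closed; hence $s^{*}(d\theta)=d\beta=0$, so $\widetilde L$ is isotropic, and as $\dim\widetilde L=\frac12\dim\mathcal{S}(M,\xi)$ it is Lagrangian. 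One checks that $\widetilde L$ is independent of $\alpha$, while replacing $\beta$ by a constant multiple translates $\widetilde L$ by the corresponding element of the $\R$-action on $\mathcal{S}(M,\xi)$.

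For the statement about the cohomology class: since $\widetilde L$ is Lagrangian, $\theta|_{\widetilde L}$ is closed, and transporting it along the diffeomorphism $\pi|_{\widetilde L}$ and using $s^{*}\theta=\beta$ identifies the resulting class on $L$ with $[\beta]\in\H^{1}(L;\R)$; this is the class denoted $\lambda$ in the statement, characterized by $\pi^{*}[\beta]=[\theta|_{\widetilde L}]$. Its well-definedness up to a nonzero constant I would deduce from the fact that two closed $1$-forms cutting out the same corank-one foliation differ by multiplication by a nowhere-zero first integral, whose effect on the cohomology class is, by standard foliation theory, multiplication by a nonzero constant on each connected component of $L$.

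For the converse, let $\widetilde L\subset\mathcal{S}(M,\xi)$ be Lagrangian with $\pi|_{\widetilde L}$ an embedding onto $L:=\pi(\widetilde L)$. I would push the closed $1$-form $\theta|_{\widetilde L}$ down to a $1$-form $\beta$ on $L$; unwinding $\theta=p\,dq$ shows that at $x=\pi(\Theta)$ this pushed-down form is simply $\beta_{x}=\Theta|_{T_{x}L}$, so $\ker\beta_{x}=T_{x}L\cap\xi_{x}$. Then $\beta$ is closed, being the image of a closed form under a diffeomorphism, and it is nowhere zero precisely when $L$ is transverse to $\xi$; hence $\xi\cap TL=\ker\beta$ is an integrable corank-one distribution defined by a closed $1$-form, i.e.\ $L$ is pre-Lagrangian.

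The hard part will be the regularity input in the converse --- that $\pi|_{\widetilde L}$ is genuinely an embedding onto a submanifold transverse to $\xi$ --- since a Lagrangian tangent to a fibre direction at some point would project to a singular, lower-dimensional image, and this must be ruled out; a companion delicate point is the sharp ``up to a nonzero constant'' normalization used for the foliation class. I would treat both in the local model $\mathcal{S}(M,\xi)\cong M\x\R$ near a contact form, where $\theta=e^{r}\pi^{*}\alpha$ and $d\theta=e^{r}(dr\wedge\pi^{*}\alpha+\pi^{*}d\alpha)$: nondegeneracy of $d\alpha|_{\xi}$ pins the characteristic (fibre) line inside $\ker\theta$ and dictates how an isotropic $(n+1)$-plane can lie relative to the fibres, and I would carry out these computations following \cite{EHS}.
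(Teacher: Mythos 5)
The paper itself gives no proof of this proposition (it is quoted from \cite{EHS}; the only indication offered is the remark that $\widetilde L$ is the graph of a form $\beta$ with $d\beta|_L=0$), and your forward construction $s=g^{-1}\alpha$, i.e.\ the graph of the closed defining form $\beta$ inside $\mathcal{S}(M,\xi)\subset T^*M$, is exactly that construction; this part of your argument is correct and complete, including $s^*\theta=\beta$ and the independence of the choice of $\alpha$. The two places where you defer, however, are precisely the nontrivial content, so as a proof the proposal has genuine gaps. First, the ``uniquely up to multiplication by a non-zero constant'' claim: you correctly reduce it to comparing $[\beta]$ and $[f\beta]$ for a nowhere-vanishing first integral $f$, but then you wave the crux through as ``standard foliation theory''. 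Proportionality of $[f\beta]$ and $[\beta]$ is not an off-the-shelf fact: for a closed connected $L$ it follows from the dichotomy for foliations defined by closed nonvanishing $1$-forms (non-discrete period group forces dense leaves, hence $f$ constant; discrete period group makes $\beta$ proportional to the pullback of the angular form under a fibration over $S^1$ with connected fibres, so $f$ descends and $[f\beta]=(\int f)\,[\beta]$), and for noncompact $L$ the assertion is genuinely delicate, since level sets of a local primitive can be disconnected over a whole interval of levels and a first integral need not be a function of a global ``level''. This step is the actual content of that sentence of the proposition and must be argued, not cited.

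Second, the converse. You add the hypothesis that $\pi|_{\widetilde L}$ is an embedding and then defer the transversality/fibre-tangency issue to ``computations following \cite{EHS}''. Two remarks. (i) Under your added hypothesis the missing step is short and you should include it: if $\theta$ vanished on $T_p\widetilde L$, then $T_p\widetilde L\subset\ker\theta_p$; since $d\theta$ restricted to $\ker\theta_p$ has kernel exactly the fibre direction $\mathbb{R}Z_p$ and is nondegenerate on a complement (this is your $d\theta=e^r(dr\wedge\pi^*\alpha+\pi^*d\alpha)$ computation), any $(n+1)$-dimensional isotropic subspace of $\ker\theta_p$ must contain $Z_p$, contradicting injectivity of $d\pi$ on $T_p\widetilde L$; hence an immersed projection is automatically transverse to $\xi$, and your pushed-down form $\beta$ is nowhere zero. (ii) Without some such hypothesis the fibre-tangency \emph{cannot} be ``ruled out'', contrary to what you suggest: locally there exist Lagrangian submanifolds of $\mathcal{S}(M,\xi)$ tangent to a fibre, and their projections fail to be $(n+1)$-dimensional submanifolds, so the literal converse only makes sense with the regularity of $\pi|_{\widetilde L}$ built into the statement (as in \cite{EHS}); saying ``this must be ruled out \dots following \cite{EHS}'' leaves the key point unproved.
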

Thus a pre-Lagrangian submanifold carries a canonical projective class of the form $\lambda$. By definition there exists a contact form $\beta$ with $d\beta|_L=0$; in fact the desired Lagrangian submanifold $\widetilde L$ is the graph of $\beta|_L$.

\begin{Def}
A {\em Legendrian knot} in a contact 3-manifold $(M,\xi)$ is a Legendrian embedding $\gamma:S^1\pf M$, i.e. $\gamma'(\theta)\in\xi_{\gamma(\theta)}$ for all $\theta\in S^1$. A {\em Legendrian chord} is a Legendrian embedding $\gamma:[a,b]\pf M$ which begins and ends on pre-Lagrangian submanifolds.
\end{Def}
In this appendix, we consider $\R^2\x [0,1]$ with standard contact structure $\xi_\st=\ker\alpha_\st$ where $\alpha_\st=dz+xdy$ for $(x,y,z)\in\R\x[0,1]\x\R$. Let $\gamma$ be either a Legendrian knot or a Legnedrian chord in $\R^2\x[0,1]$ and write $\gamma(s)=(x(s),y(s),z(s))$. Then the Legendrian condition yields
$$
\alpha_\st(\gamma')=z'+xy'\equiv0.
$$
\begin{Def}
The {\em front projection} of a curve $\gamma(s)=(x(s),y(s),z(s))$ in $\R\x[0,1]\x\R$ is a curve
$$
\gamma_\FF(s)=(y(s),z(s))\subset [0,1]\x\R.
$$
\end{Def}
If a curve $\gamma$ is  Legendrian then $y'(s)=0$ implies $z'(s)=0$, thus the front projection of $\gamma$ has singular points where $y'=0$, so called {\em cusp points}; moreover it does not have vertical tangencies. We call $\gamma$ or $\gamma_\FF$ {\em generic} if cusp points are isolated.
\begin{Lemma}
Let $\gamma_\FF:(a,b)\pf[0,1]\x\R$ be a front projection of a certain Legendrian immersion. Then away from the cusp points we can recover the unique Legendrian immersion $\gamma:(a,b)\pf(\R\x[0,1]\x\R,\xi_\st)$ via
$$
x(s)=-\frac{z'(s)}{y'(s)}.
$$
The curve is embedded if and only if $\gamma_\FF$ has only transverse self-intersections.
\end{Lemma}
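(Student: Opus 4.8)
The plan is to read everything off the defining contact form $\alpha_\st=dz+x\,dy$. For a curve $\gamma(s)=(x(s),y(s),z(s))$ one computes $\gamma^*\alpha_\st=(z'(s)+x(s)y'(s))\,ds$, so the Legendrian condition $\gamma'(s)\in\xi_\st$ is the pointwise algebraic relation $z'(s)+x(s)y'(s)=0$. The cusp points are precisely the parameters with $y'(s)=0$ (at which $z'(s)=0$ as well, by the relation). Away from them I would simply divide: since $\gamma_\FF=(y,z)$ is smooth and $y'(s)\neq0$ there, the formula $x(s)=-z'(s)/y'(s)$ defines a smooth function, and hence a smooth curve $\gamma(s):=(x(s),y(s),z(s))$ whose front projection is $\gamma_\FF$. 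By construction $\gamma$ satisfies the Legendrian relation, and since its middle component has nonvanishing derivative $y'(s)$ it is automatically an immersion. Uniqueness is immediate from the same relation: any Legendrian immersion with front projection $\gamma_\FF$ has components of the form $(\ast,y,z)$ with the first one forced to equal $-z'/y'$ wherever $y'\neq0$, so it agrees with $\gamma$ off the cusps; in particular the reconstructed curve coincides there with the given Legendrian immersion.

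For the embeddedness criterion the content is injectivity, the immersion property being already in hand. I would examine self-intersections directly. If $\gamma(s_1)=\gamma(s_2)$ with $s_1\neq s_2$, projecting gives $\gamma_\FF(s_1)=\gamma_\FF(s_2)$, so $\gamma_\FF$ self-intersects; conversely, at any self-intersection $p=\gamma_\FF(s_1)=\gamma_\FF(s_2)$ one has $\gamma(s_i)=\bigl(-z'(s_i)/y'(s_i),\,p\bigr)$, so $\gamma(s_1)=\gamma(s_2)$ precisely when $z'(s_1)/y'(s_1)=z'(s_2)/y'(s_2)$. Since $y'(s_i)\neq0$ off the cusps, this equality of slopes is equivalent to linear dependence of the front's tangent vectors $\gamma_\FF'(s_i)=(y'(s_i),z'(s_i))$, i.e.\ to the self-intersection of $\gamma_\FF$ at $p$ failing to be transverse. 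Hence $\gamma$ is injective exactly when every self-intersection of $\gamma_\FF$ is transverse, which together with the immersion property gives the stated equivalence.

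None of the computations here (smoothness of $-z'/y'$, the slope comparison) presents any real difficulty; the only point deserving care is to fix the conventions so that the slope dichotomy above literally is the transversality dichotomy — that is, to record explicitly that a transverse crossing of the front forces the two $x$-coordinates of the Legendrian lift at that crossing to be distinct, which is exactly why transversality of the front is the obstruction to the lift meeting itself, while a tangential crossing of the front lifts to an honest self-intersection of $\gamma$. I would also make clear at the outset that ``embedded'' is meant here in the sense of an injective immersion, so that the characterization is not burdened with extraneous point-set topology.
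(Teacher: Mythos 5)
Your proof is correct and is the standard front--projection reconstruction argument: the paper states this lemma without proof (it is recalled as a standard fact from the contact-topology references \cite{EHS,Ge}), and your computation --- deriving $x(s)=-z'(s)/y'(s)$ from $\gamma^*\alpha_\st=(z'+xy')\,ds$, noting $y'\neq0$ off the cusps gives smoothness, uniqueness and the immersion property, and identifying equality of the slopes $z'(s_i)/y'(s_i)$ at a crossing of $\gamma_\FF$ with failure of transversality --- is exactly the argument the statement presupposes. Your closing caveats (reading ``embedded'' as injective immersion, and recording that a transverse crossing forces distinct $x$-coordinates of the lift) are consistent with the paper's usage and close the only points where care is needed.
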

\begin{Rmk}\label{rmk:legendrian lifting}
Consider one-parameter family of the functions $\{f_r\}_{r\in[0,1]}$ on a manifold $M$, let $(r,f_r(x_r))\in [0,1]\x\R$ be one-parameter family of the critical values of $f_r$ where $x_r\in\Crit f_r$. We parameterize this one-dimensional space $(r(s),f_{r(s)}(x_r(s)))$ and compute
\bean
\frac{d}{ds}\bigr(r(s),f_{r(s)}(x_r(s))\bigr)&=\bigr(\dot r(s),df_{r(s)}(x_r(s))[\dot x_r(s)]+\dot r(s)\dot f_{r(s)}(x_r(s))\bigr)\\
&=\bigr(\dot r(s),\dot r(s)\dot f_{r(s)}(x_r(s))\bigr).
\eea
It is known that there exists a homotopy of two Morse function (the Floer action functional case is proved by Lee \cite{Lee1,Lee2}) so that the curve $(r(s),f_{r(s)}(x_r(s)))$ is generic. Moreover we also may assume that this curve has only transverse self-intersections; thus this curve can be lifted a unique Legendrian curve or chord in $\R\x[0,1]\x\R$.
\end{Rmk}

\end{document}